\documentclass[reqno]{amsart}

\usepackage{amscd,amsthm,amsmath,amssymb,mathtools}
\usepackage{upgreek,bm}
\usepackage{verbatim}
\usepackage{xcolor}
\usepackage{url}
\usepackage{enumerate,enumitem}
\usepackage{graphicx}

\usepackage{algorithm}
\usepackage{algorithmic}
\usepackage{epstopdf,epsfig,subfigure}
\usepackage{curve2e}
\usepackage{mathrsfs}
\usepackage{array}
\usepackage{stackrel}

\usepackage[numbers,sort&compress]{natbib}

\usepackage{setspace}
\usepackage[
    bookmarks=true,         
    unicode=false,          
    pdftoolbar=true,        
    pdfmenubar=true,        
    pdffitwindow=false,     
    pdfstartview={FitH},    
    pdftitle={NSE with dynamic boundary conditions},    
    pdfauthor={Author},     
    pdfsubject={Subject},   
    pdfcreator={Creator},   
    pdfproducer={Producer}, 
    pdfkeywords={keyword1, key2, key3}, 
    pdfnewwindow=true,      
    colorlinks=true,       
    linkcolor=red,          
    citecolor=blue,        
    filecolor=magenta,      
    urlcolor=cyan,           
hypertexnames=false
]{hyperref}

\theoremstyle{plain}

\newtheorem{theorem}{Theorem}[section]

\newtheorem{lemma}[theorem]{Lemma}
\newtheorem{corollary}[theorem]{Corollary}

\newtheorem{assumption}[theorem]{Assumption}

\theoremstyle{definition}

\newtheorem{remark}[theorem]{Remark}

\numberwithin{equation}{section}

\usepackage{geometry}

\newcommand{\linspan}{\mathop{\rm span}\nolimits}

\newcommand{\supp}{\mathop{\rm supp}\nolimits}
\newcommand{\diver}{\mathop{\rm div}\nolimits}
\newcommand{\curl}{\mathop{\rm curl}\nolimits}

\newcommand{\rest}{\left.\kern-2\nulldelimiterspace\right|_}
\newcommand{\norm}[2]{\left|#1\right|_{#2}}
\newcommand{\dnorm}[2]{\left\|#1\right\|_{#2}}

\newcommand{\Id}{{\mathbf1}}
\newcommand{\indf}{1}

\newcommand{\clE}{{\mathcal E}}

\newcommand{\clL}{{\mathcal L}}

\newcommand{\clN}{{\mathcal N}}
\newcommand{\clO}{{\mathcal O}}

\newcommand{\clU}{{\mathcal U}}
\newcommand{\clV}{{\mathcal V}}
\newcommand{\clW}{{\mathcal W}}

\newcommand{\clY}{{\mathcal Y}}

\newcommand{\bbA}{{\mathbb A}}
\newcommand{\bbB}{{\mathbb B}}

\newcommand{\bbD}{{\mathbb D}}

\newcommand{\bbN}{{\mathbb N}}

\newcommand{\bbR}{{\mathbb R}}
\newcommand{\bbS}{{\mathbb S}}

\newcommand{\bfD}{{\mathbf D}}

\newcommand{\bfH}{{\mathbf H}}

\newcommand{\bfP}{{\mathbf P}}

\newcommand{\bfT}{{\mathbf T}}

\newcommand{\bfV}{{\mathbf V}}

\newcommand{\bfY}{{\mathbf Y}}

\newcommand{\fkA}{{\mathfrak A}}
\newcommand{\fkB}{{\mathfrak B}}

\newcommand{\fkD}{{\mathfrak D}}

\newcommand{\fkL}{{\mathfrak L}}

\newcommand{\fkP}{{\mathfrak P}}

\newcommand{\fkV}{{\mathfrak V}}

\newcommand{\rmD}{{\mathrm D}}

\newcommand{\rmS}{{\mathrm S}}

\newcommand{\bff}{{\mathbf f}}
\newcommand{\bfg}{{\mathbf g}}

\newcommand{\bfn}{{\mathbf n}}

\newcommand{\bft}{{\mathbf t}}

\newcommand{\bfv}{{\mathbf v}}
\newcommand{\bfw}{{\mathbf w}}

\newcommand{\bfy}{{\mathbf y}}

\newcommand{\rmc}{{\mathrm c}}
\newcommand{\rmd}{{\mathrm d}}
\newcommand{\rme}{{\mathrm e}}

\newcommand{\fka}{{\mathfrak a}}

\definecolor{DarkBlue}{rgb}{0,0.08,0.45}
\definecolor{DarkRed}{rgb}{.65,0,0}
\definecolor{applegreen}{rgb}{0.55, 0.71, 0.0}

\newcounter{mymac@matlab}
\newcommand{\matlab}{MATLAB%
   \ifnum\value{mymac@matlab}<1%
   \textregistered%
   \setcounter{mymac@matlab}{1}%
   \fi%
  }

\providecommand{\argmax}{\operatorname*{argmax}}

\pdfoutput=1

\begin{document}
\title{Stabilization of nonautonomous Navier--Stokes flows under dynamic slip boundary conditions}
\author{Buddhika Priyasad$^{\tt1}$}
\author{S\'ergio S.~Rodrigues$^{\tt2}$}
\thanks{
\vspace{-1em}\newline\noindent
{\sc MSC2020}: 93D15, 35Q35,   	76D55\;
\newline\noindent
{\sc Keywords}: Navier--Stokes equations, dynamic boundary conditions, exponential stabilization,  projections based explicit feedback.
\newline\noindent
  $^{\tt1}$ Department of Mathematics and Statistics, University of Konstanz, 78457 Konstanz, Germany.
 \newline\noindent  
 $^{\tt2}$ Center for Mathematics and Applications (NOVA Math) and Department of Mathematics, NOVA School of Science and Technology (NOVA FCT), NOVA University of Lisbon, Campus de Caparica
2829-516,  Portugal.
\quad
\newline\noindent
{\sc Emails}:
{\small\tt priyasad@uni-konstanz.de,\quad ssi.rodrigues@fct.unl.pt}%
}

\begin{abstract}
Exponential stabilizability of the incompressible Navier–Stokes equations under dynamic slip boundary conditions toward arbitrary time-dependent trajectories is proven. The feedback control law is constructed explicitly using oblique projections and realized through a finite number of spatially localized interior actuators, without requiring spectral assumptions. The approach extends to various slip boundary condition types (Navier, vorticity-type, and Neumann) and applies to multi-connected domains. Weak solution existence and exponential decay estimates are established, with the stabilization rate depending on the boundary dynamics parameters.
\end{abstract}
\maketitle
\pagestyle{myheadings} \thispagestyle{plain} \markboth{\sc  B. Priyasad and S. S.
Rodrigues}{\sc }
	\section{Introduction}	
We consider the stabilizability of the incompressible Navier–Stokes system with dynamic slip boundary conditions on a smooth bounded domain $\Omega \subset \mathbb{R}^d$, with $d \in\{2,3\}$, towards a reference solution~$\widehat\bfy$ satisfying, for time~$t>0$,
\begin{subequations}\label{nse-haty-intro}
\begin{align}
&\tfrac{\partial}{\partial t}\widehat \bfy =\nu\Delta\widehat\bfy -\langle\widehat\bfy \cdot\nabla\rangle \widehat\bfy  + \nabla \widehat p + \bff,\qquad \diver \widehat\bfy = 0, \label{nse-haty-intro-dyn}\\
&\beta\tfrac{\partial}{\partial t}\widehat\bfy\rest{\partial\Omega}=-  \fkD\widehat\bfy\rest{\partial\Omega}-\nu\fkL\widehat\bfy+ \bfg,\qquad (\widehat\bfy  \cdot \bfn)\rest{\partial\Omega} = 0,\label{nse-haty-intro-bdry}\\
&\widehat\bfy(\cdot,0) = \widehat\bfy_0,\qquad\widehat\bfy(\cdot,0)\rest{\partial\Omega}=\widehat\bfy_{0\partial}. \label{nse-haty-intro-ic}
\end{align}
Here~$\widehat \bfy(x,t)\in\bbR^d$ represents the velocity of the fluid and~$\widehat p(x,t)\in\bbR$ is the pressure, $x\in\Omega$. The dynamic boundary conditions, in~\eqref{nse-haty-intro-bdry}, includes the usual boundary tangency of the velocity field at the boundary~$\partial\Omega$ of~$\Omega$, $(\widehat\bfy  \cdot \bfn)\rest{\partial\Omega}=0$,  where~$\bfn=\bfn(\overline x)$ stands for the unit outward normal vector at~$\overline x\in\partial\Omega$; further, it includes a condition on the tangential component of~$\widehat\bfy\rest{\partial\Omega}$, where the operator~$\fkL$ satisfies the Green formula
\begin{equation}\label{barL1}
(\Delta w,z)_{L^2(\Omega,\bbR^d)}=-(\overline \fkL w,\overline \fkL z)_{L^2(\Omega,\bbR^{d_1})}+(\fkL w,z)_{L^2(\partial\Omega,\bfT\partial\Omega)},
\end{equation}
for divergence-free smooth vector fields~$w$ and~$z$ tangent to the boundary and for some
\begin{equation}\label{barL2}
\overline \fkL\colon W^{1,2}(\Omega,\bbR^d) \to L^2(\Omega,\bbR^{d_1}).
\end{equation}
In~\eqref{barL1}, $\bfT\partial\Omega$ denotes the tangent bundle of~$\partial\Omega$. The operator~$\fkD\colon\fkV\to\fkV'$ is symmetric and positive definite, for some space~$\fkV\xhookrightarrow{\rmd}L^2(\partial\Omega,\bfT\partial\Omega)$, with
\begin{equation}\label{fkDalpha}
\alpha\coloneqq\min\limits_{\xi\in\fkV}\tfrac{\langle\fkD\xi,\xi\rangle_{\fkV',\fkV}}{\norm{\xi}{L^2(\partial\Omega,\bfT\partial\Omega)}^2}>0.
\end{equation}
\end{subequations}
The parameters~$\nu>0$ and~$\beta>0$ are constants. Thus, we will have dynamic slip boundary conditions. Note that~$\beta=0$ corresponds to static slip boundary conditions. We will cover the dynamic version of the classical Navier boundary conditions, with~$\fkL w=[(\bfD w)\bfn]_\bfT$ and~$\overline\fkL w=\sqrt2(\nabla w+(\nabla w)^\top)\in\bbR^{d\times d}\sim\bbR^{d^2}$, with~$z_\bfT\coloneqq z-(n\cdot z)\bfn$ and~$\nabla w=[\tfrac{\partial w_i}{\partial x_j}]$;   dynamic vorticity-type boundary conditions, with~$\fkL w=-\bfn\times\curl w$ in case~$d=3$ and~$\fkL w=(\curl w)\bfn^\perp=(\curl w)(-\bfn_2,\bfn_1)$ in case~$d=2$,  and~$\overline\fkL w=\curl w\in\bbR^{2d-3}$; and dynamic Neumann boundary conditions, with~$\fkL w= [(\nabla w)\bfn]_\bfT$ and~$\overline\fkL w=\nabla w\in\bbR^{d^2}$. The operator~$\fkD$ can be, for example, a scaled identity~$\fkD=\alpha\Id$, $\alpha>0$, or a shifted (nonnegative definite) Laplace-Beltrami operator~$\Delta_\partial$ as~$\fkD=\alpha\Id+\Delta_\partial$.  The spatial domain~$\Omega$ is multi-connected with boundary consisting of a finite number of connected components~\cite[Appen.~I]{Temam01}.
The terminology for the boundary conditions may vary within the literature: curl-type boundary conditions are sometimes called Lions boundary conditions~\cite{Kelliher06,PhanRod17} and Neumann boundary conditions~\cite{MitreaMonniaux09b}. In~\eqref{nse-haty-intro}, $\bff, \bfg$ denote external forces depending on both space and time, with $\bff: \bbR^d \times \bbR \to \bbR^d$ and $\bfg: \bbR^d \times \bbR \to \bbR^d$. The term $\bff$ represents volume (interior) forces, while $\bfg$ acts on the boundary.
\subsection{Stabilizability to a reference trajectory}
We assume that the reference trajectory~$\widehat\bfy$ has a desired asymptotic behavior, which we would like to track. Due to instability, the trajectories~$\bfy$ issued from a different initial state may have a different asymptotic behavior or may even blow up in finite time, in some norm of interest. 
In this situation we need and seek a control forcing to track the target~$\widehat\bfy$. The control dynamics reads 
\begin{subequations}\label{nse-y-intro}
\begin{align}
&\tfrac{\partial}{\partial t} \bfy =\nu\Delta\bfy -\langle\bfy \cdot\nabla\rangle \bfy  + \nabla p +\bff+\sum_{i=1}^mu_i\Phi_i,\qquad \diver \bfy = 0, \label{nse-y-intro-dyn}\\
&\beta\tfrac{\partial}{\partial t}\bfy=-  \fkD\bfy-\nu\fkL\bfy+ \bfg,\qquad (\bfy  \cdot \bfn)\rest{\partial\Omega} = 0,\label{nse-y-intro-bdry}\\
&\bfy(\cdot,0) =\bfy_0,\qquad\bfy(\cdot,0)\rest{\partial\Omega}=\bfy_{0\partial} \label{nse-y-intro-ic}
\end{align}
\end{subequations}
with the control forcing given by a linear combination of a finite set of actuators~$\Phi_i=\Phi_i(x)$. The vector~$u=u(t)\in\bbR^m$ is our control input that we use to tune the actuators.

Further, motivated by robustness properties, we seek the input in feedback form~$u(t)=K(\bfy(\cdot,t) -\widehat\bfy(\cdot,t) )$ for a suitable operator~$K$. In other words, our goal is  to find the feedback-input operator~$K$ such that with the input~$u=K(\bfy -\widehat\bfy)$, we will have
\begin{equation}\label{goal-intro}
\lim_{t\to\infty}\norm{(\bfy(\cdot,t),\bfy(\cdot,t)\rest{\partial\Omega}) -(\widehat\bfy(\cdot,t),\widehat\bfy(\cdot,t)\rest{\partial\Omega}) }{X}=0,
\end{equation}
in a suitable normed space~$X$. In fact, we will be seeking exponential convergence, where
\begin{subequations}\label{goal-intro-exp}
\begin{equation}
\norm{\bfY(\cdot,t) -\widehat\bfY(\cdot,t) }{X}\le C\rme^{-\mu(t-s)}\norm{\bfY(\cdot,s) -\widehat\bfY(\cdot,s) }{X},\quad\mbox{for all }t\ge s\ge 0,
\end{equation}
for suitable constants~$C\ge1$ and~$\mu>0$,
where
\begin{equation}
\bfY(\cdot,t)\coloneqq (\bfy(\cdot,t),\bfy(\cdot,t)\rest{\partial\Omega})\quad\text{and}\quad \widehat\bfY(\cdot,t)\coloneqq (\widehat\bfy(\cdot,t),\widehat\bfy(\cdot,t)\rest{\partial\Omega}).
\end{equation}
\end{subequations}

\subsection{On the literature}
Stabilization of evolving processes is an important task in control applications. This manuscript is concerned with the stabilizability of Navier--Stokes modeling the velocity of a fluid. We focus on models subject to dynamic boundary conditions, the stabilization of which seems to be not investigated in the literature.

Dynamic boundary conditions have been the subject of several works in recent years as, for example,~\cite{Maringova19,PrazakPryiasad24,PrazakZelina23,PrazakZelina24,AbbatielloBulicMaring21,RackeZheng03} focusing of models of fluid dynamics.

Further works for parabolic-like models are~\cite{DenkPrussZacher08,GrobbVanDalsenSauer89,Hintermann89,Guidetti16,HombergKrumbRehberg13,MiranvilleZelik05}. Dynamic boundary conditions can also be implicitly present under static Wentzell boundary conditions~\cite{DenkKunzePloss21}.

We consider Navier--Stokes equations under dynamic slip boundary conditions. The static version of these conditions have been the subject of several works in recent years, for example,
\cite{CloMikRob98,MitreaMonniaux09a,MitreaMonniaux09b,AmroucheSeloula11,Kelliher06,ChemetovCiprianoGavrilyuk10} and 
\cite{AmroucheRejaiba14,AcTapiaAmrouConcaGhosh21,KelliherLacLFilhoNLopes25,XiaoXin13,PhanRod17}

Since fluids are ubiquitous in real world, the stabilizability of Navier--Stokes flows has received the dedication of many researchers, under static boundary conditions, for example, we can mention the earlier works with~\cite{Barbu03,BarbuTri04,BarbuLasTri06,Fursikov01,Fursikov04,Lefter09,Munteanu12,Munteanu3D12,Raymond19,LasieckaPriyTrig21a,LasieckaPriyTrig21b,lasiecka2020uniform,Rod21-amo}
and the more recent
\cite{AzouaniTiti14,BreitenKunisch20,BanschBennerSaakWech15,Barbu_TAC13,DharRaymThev11,FurKornev12,BadTakah11,Munteanu12,Munteanu3D12,Raymond19,LasieckaPriyTrig21a,LasieckaPriyTrig21b,Lasiecka2024, BarRodShi11,Rod21-amo}.
The stabilizability to a steady state is often considered and proven making use of spectral properties of the linearization of the dynamics around the steady state.

In this manuscript we consider the stabilizability to a more general, possibly time-dependent trajectory, where the spectral properties are not an appropriate tool to investigate stability properties~\cite{Wu74}. The mathematical study of the stabilizability to time-dependent trajectories has been initiated in~\cite{BarRodShi11} with internal controls, then several works for several models, here we mention~\cite{Azmi22,Rod21-amo} focused on Navier--Stokes equations. 
We shall develop a variant of the strategy introduced in~\cite{Rod21-aut}, applied in~\cite{RodSeifu24} to Navier--Stokes equations under static curl-type boundary conditions, to the case of dynamic slip boundary conditions.

\subsection{Contents}
The rest of this paper is organized as follows. In Section~\ref{sec:Stabil-statemResults} we present a literature background on stabilization of fluids and present the novel contributions of the paper. The technical mathematical setting is given in Section~\ref{sec:mathSett} and,  subsequently, the existence of weak solutions is proven in Section~\ref{sec:well-posed} for both the free and the controlled dynamics. 
Section~\ref{sec:monotStokes} is dedicated to showing that by placing appropriately a large enough number of actuators we can impose a suitable monotonicity property of the sum between the Stokes operator and the feedback-input operator. The main stabilizability result is proven in Section~\ref{sec:stabil} under suitable general assumptions required throughout the paper. The satisfiability of these assumptions is shown in Section~\ref{sec:satisfAss}.


\section{Stabilization of Navier--Stokes equations with dynamic boundary conditions}\label{sec:Stabil-statemResults}

Stabilization of evolving processes is a fundamental challenge in control theory and its applications to Science and Engineering. This manuscript is focused on the stabilizability of
Navier--Stokes equations modeling the velocity field of a fluid, specifically on models subject to dynamic boundary conditions, a class of conditions for which the stabilization problem has received limited attention in the existing literature despite its practical importance.

\subsection{Dynamic boundary conditions in fluid dynamics}

Dynamic boundary conditions represent a significant complement to more classical static boundary conditions, allowing for complex interactions between fluid flow and dynamics at the boundary. Unlike static boundary conditions that prescribe fixed constraints at domain boundaries, dynamic boundary conditions evolve according to differential equations, thereby introducing additional degrees of freedom and complexity to the control problem.

In recent years, dynamic boundary conditions have been the subject of substantial investigation. Notable works on parabolic-like models include those by Denk, Pr\"{u}ss, and Zacher~\cite{DenkPrussZacher08}, as well as contributions by Grobb, Van Dalsen, and Sauer~\cite{GrobbVanDalsenSauer89}, Hintermann~\cite{Hintermann89}, Guidetti~\cite{Guidetti16}, Homberg, Krumb, and Rehberg~\cite{HombergKrumbRehberg13}, and Miranville and Zelik~\cite{MiranvilleZelik05}. For fluid dynamics applications specifically, dynamic boundary conditions have been extensively studied in~\cite{Maringova19,PrazakPryiasad24,PrazakZelina23,PrazakZelina24,AbbatielloBulicMaring21,RackeZheng03, Pruss2006}. Furthermore, dynamic boundary conditions can arise implicitly through static Wentzell boundary conditions, as discussed in~\cite{DenkKunzePloss21}.

\subsection{Background and motivation}

The present work considers Navier--Stokes equations under dynamic slip boundary conditions. The investigation of dynamic conditions introduces new mathematical challenges in comparison to static boundary conditions, namely due to the coupling between the flow dynamics and boundary evolution, necessitating novel analytical techniques and control strategies.

The importance of slip boundary conditions in modeling and applications has led to  works dedicated by several authors to the investigation of existence and regularity of solutions; we refer the reader to~\cite{CloMikRob98,MitreaMonniaux09a,MitreaMonniaux09b,AmroucheSeloula11,Kelliher06,ChemetovCiprianoGavrilyuk10,AmroucheRejaiba14,AcTapiaAmrouConcaGhosh21,KelliherLacLFilhoNLopes25,XiaoXin13,PhanRod17} and references therein.

 \subsection{Stabilization of Navier--Stokes flows}

Since fluids are ubiquitous in practical applications, the stabilizability of Navier--Stokes flows has received extensive attention from the control theory community. Under static boundary conditions, numerous works have contributed to our understanding of stabilization mechanisms. Early foundational contributions include~\cite{Barbu03,BarbuTri04,BarbuLasTri06,Fursikov01,Fursikov04,Lefter09,Munteanu12,Munteanu3D12,Raymond19,LasieckaPriyTrig21a,LasieckaPriyTrig21b,BarRodShi11,Rod21-amo}, while more recent developments can be found in~\cite{AzouaniTiti14,BreitenKunisch20,BanschBennerSaakWech15,Barbu_TAC13,DharRaymThev11,FurKornev12,BadTakah11}.

A significant portion of this research has focused on stabilization to steady states, where classical spectral analysis of linearized dynamics provides effective tools for designing controllers. However, this manuscript pursues the more ambitious goal of stabilization to general trajectories that are allowed to be time-dependent. This generalization is motivated by practical scenarios where equilibrium states either do not exist or are inappropriate for the application and, consequently, tracking time-varying reference trajectories becomes essential. In such settings, spectral methods are fundamentally inadequate~\cite{Wu74}, and alternative analytical frameworks have been sought during recent years.

\subsection{Novel contributions and technical approach}

The mathematical theoretical study of stabilization to time-dependent trajectories has been initiated in~\cite{BarRodShi11} with internal controls for Navier--Stokes equations. Subsequent works have extended these ideas to various settings~\cite{Azmi22,Rod21-amo}, providing methodologies applicable to different boundary conditions and control configurations.

In this manuscript, we develop a variant of the stabilization strategy introduced in~\cite{Rod21-aut} and successfully applied to Navier--Stokes equations under static curl-type boundary conditions in~\cite{RodSeifu24}. The key contribution of this work is the extension of this approach to the more complex setting of dynamic slip boundary conditions. This extension is nontrivial, as dynamic boundary conditions introduce additional coupling effects and require careful treatment of the estimates that underpin stabilization analysis. We demonstrate how to overcome these obstacles and establish sufficient conditions for stabilizability of this new class of problems.

\subsection{Main result}
We state the main result of the manuscript.

\subsubsection{The actuators}
To state our main stabilizability result, firstly it is necessary to specify the set of actuators~$\Phi_i$. Throughout this work, up to a translation, a rotation, and a homothety, each actuator~$\Phi_i\in L^2(\Omega,\bbR^d)$ is of the form
\begin{subequations}\label{Act-intro}
\begin{align}
&\Phi_i\in\{\Phi^{[2]}_1,\Phi^{[2]}_2\},\qquad&&\Phi^{[2]}_1\coloneqq \begin{bmatrix}
    \indf_{\omega}\\0
\end{bmatrix},\quad\Phi^{[2]}_1\coloneqq\begin{bmatrix}
    0\\\indf_{\omega}
\end{bmatrix},
\intertext{in case $d=2$, and}
&\Phi_i\in\{\Phi^{[3]}_1,\Phi^{[3]}_2,\Phi^{[3]}_3\},\qquad&&\Phi^{[3]}_1\coloneqq\begin{bmatrix}
    \indf_{\omega}\\0\\0
\end{bmatrix},\quad\Phi^{[3]}_2\coloneqq\begin{bmatrix}
    0\\\indf_{\omega}\\0
\end{bmatrix},\quad\Phi^{[3]}_3\coloneqq\begin{bmatrix}
    0\\0\\\indf_{\omega}
\end{bmatrix},
\end{align}
in case $d=3$, where~$\indf_{\omega}$ is the indicator function of a suitable subset~$\omega\subset\Omega$.
\end{subequations}
Let us denote the number of elements in a partition~$\bfP$ of~$\Omega$ by $\#\bfP$. For a disk-domain~$\Omega$,  the trivial case where $\#\bfP = 1$ (thus, where the partition is the singleton  $\bfP=\bfP_1=\{\Omega\}$) is depicted in Fig.~1; in this case we have chosen a square-shaped support~$\omega$ for the actuators centered at the center of the disk. For the cases with $\#\bfP > 1$, the placement of actuators will follow a strategy as illustrated again in Fig.~1: one (rescaled) actuator support in each element of the partition. This approach generalizes to more general domains that admit  decompositions into small subdomains, each possessing a certain uniform Sobolev extension property. The precise statement of this property is given in Assumption~5.1.

\begin{figure}[htbp]%
    \centering%
   \subfigure[Placement in a disk.] 
    {\includegraphics[width=.85\textwidth]{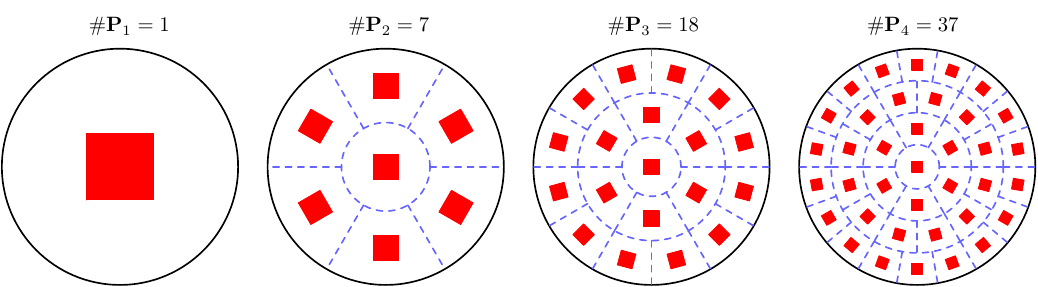}} 
  \caption{Placement  of square-shaped supports of the actuators~$\omega_i$ in a disk.}%
     \label{fig:act}%
\end{figure}

{ More precisely, consider the set of $m$ actuators where $m = d \cdot \#\bfP$, defined by
\[
U_m = U_\bfP = \left\{\Phi^{[d]}_j \mid 1 \le j \le d, \quad 1 \le i \le \#\bfP\right\}
\]
for a given partition $\bfP$ of $\Omega$. We have~$d$ actuators~$\Phi^{[d]}_j$, $1\le j\le d$, supported on a common subdomain $\omega_i$ congruent to~$\omega$, $\omega_i\subseteq\bfP^i\subseteq\Omega$ for each element~$\bfP^i\in\bfP$ as illustrated in Fig.~\ref{fig:act}.

To achieve stabilizability we will  require a sufficiently large number $M$ of actuators. However, by construction the total volume occupied by all actuators is fixed a priori. This is possible because we decrease proportionally the volume covered by each actuator as we increase~$M$; see Fig.~\ref{fig:act}.

\begin{remark} In this manuscript we consider smooth domains in order to cover several types of boundary conditions. Some results in the literature, depending on these conditions, are stated for regular boundaries only (e.g., the result in~\cite[Appen.~I, Prop.~1.4]{Temam01} used to deal with curl-type boundary conditions). Our results will be valid  for polygonal/polyhedral domains as well, if the existence  of weak solutions holds for such domains with the given boundary conditions. Note that we can partition a polygonal domain into a finite number of fixed triangles and/or squares, subsequently we can partition each of those subrectangles/subtriangles as in Fig.~\ref{fig:act-recttri} (thus, obtaining partitions with a finite number of subdomains only, up to a translation, a rotation, and a homothety; see~\cite[Rem.~2.8]{AzmiKunRod23-tac}).
\end{remark}
\begin{figure}[htbp]%
    \centering%
\subfigure[Placement in a rectangle.]
    {\includegraphics[width=.8\textwidth]{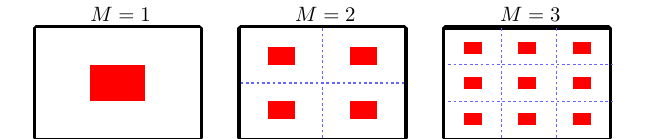}}\\
   \subfigure[Placement in a triangle.] 
    {\includegraphics[width=.85\textwidth]{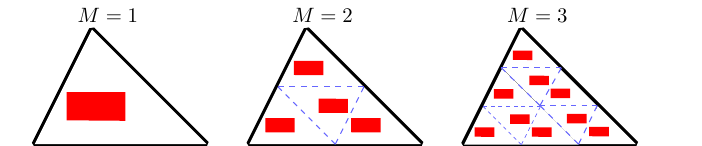}} \\
  \caption{Placement  of the supports of the actuators in rectangles (a) and triangles (b).}%
     \label{fig:act-recttri}%
\end{figure}

\subsubsection{The explicit feedback-input operator}
In this manuscript, we employ a feedback-input control law as follows. Let $\Pi$ denote the Helmholtz projection onto the subspace~$H\subset L^2(\Omega,\bbR^d)$,
\begin{equation}\label{spaceH}
    H\coloneqq\{h\in L^2(\Omega,\bbR^d)\mid\; \diver h=0\mbox{ and }( y  \cdot \bfn)\rest{\partial\Omega} = 0\}.
\end{equation}
of divergence-free vector fields tangent to the boundary~$\partial\Omega$ of the spatial domain~$\Omega$.
 The feedback-input operator is defined by
\begin{equation}\label{Feed-orth}
\Pi U_m^\diamond K_m^\lambda \coloneqq -\lambda P_{\Pi\mathcal{U}_m},
\end{equation}
where~$\lambda\ge0$ is a scalar and $P_{\Pi\mathcal{U}_m} \colon H \to \Pi\mathcal{U}_m$ is the orthogonal projection in $H$ onto $\Pi\mathcal{U}_m$, $\mathcal{U}_m = \text{span}\,U_m$, and the product~$\Pi U_m^\diamond$
is the control operator, with $U_m^\diamond \colon u \mapsto \sum_{i=1}^{m}u_i\Phi_i$.

Equivalently, the feedback law can be written explicitly as
\begin{equation}\label{Feed-intro}
K_m^\lambda y \coloneqq -\lambda [\mathcal{V}_m]^{-1}\begin{bmatrix}
    (\Pi\Phi_1,y)_{H}\\
    \vdots\\
    (\Pi\Phi_m,y)_{H}
\end{bmatrix},
\end{equation}
where the matrix $\mathcal{V}_m$ has entries
\[
[\mathcal{V}_m]_{ij} = (\Pi\Phi_i,\Pi\Phi_j)_{H}, \quad 1 \le i,j \le m,
\]
forming a diagonal matrix with diagonal entries $[\mathcal{V}_m]_{ii}=\norm{\Pi\Phi_i}{H}^2$ for $1 \le i \le m$.

\subsubsection{Stabilizability}
We are now ready to state our main result, for a disk as above in Fig.~\ref{fig:act} or polygons as in Fig.~\ref{fig:act-recttri}. System~\eqref{nse-y-intro} under the action of the feedback-input~\eqref{Feed-intro} reads
\begin{subequations}\label{nse-yK-intro}
\begin{align}
&\tfrac{\partial}{\partial t} \bfy =\nu\Delta\bfy -\langle\bfy \cdot\nabla\rangle \bfy  + \nabla p =\bff+U_M^\diamond K_M^\lambda(\bfy-\widehat\bfy)\qquad \diver \bfy = 0, \label{nse-yK-intro-dyn}\\
&\beta\tfrac{\partial}{\partial t}\bfy=-  \fkD\bfy-\nu\fkL\bfy+ \bfg,\qquad (\bfy  \cdot \bfn)\rest{\partial\Omega} = 0,\label{nse-yK-intro-bdry}\\
&\bfy(\cdot,0) =\bfy_0,\qquad\bfy(\cdot,0)\rest{\partial\Omega}=\bfy_{0\partial}. \label{nse-yK-intro-ic}
\end{align}
\end{subequations}

\begin{theorem}\label{th:main-intro}
Given a target trajectory $\widehat{\bfy} \in L^\infty(\mathbb{R}_+,V)$, there exists~$M_*\in\bbN$ such that for every~$M \ge M_*$, there exists~$\lambda_* = \lambda_*(M) \ge 0$ such that, for every~$\lambda\ge\lambda_*$ the solution to~\eqref{nse-yK-intro} satisfies the exponential stabilization~\eqref{goal-intro-exp} with decay rate $\mu = \beta^{-1}\alpha$, where $\alpha$ is defined in~\eqref{fkDalpha}, $X = L^2(\Omega,\bbR^d) \times L^2(\partial\Omega,\mathbf{T}\partial\Omega)$ and $C = 1$.
\end{theorem}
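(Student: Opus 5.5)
We argue on the error $z\coloneqq \bfy-\widehat\bfy$, together with its boundary trace, in the product Hilbert space $X=L^2(\Omega,\bbR^d)\times L^2(\partial\Omega,\bfT\partial\Omega)$. The inner product is weighted by $\beta$ on the boundary component, $((a,a_\partial),(b,b_\partial))_X = (a,b)_{L^2(\Omega)}+\beta(a_\partial,b_\partial)_{L^2(\partial\Omega)}$.

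Subtracting the reference system~\eqref{nse-haty-intro} from~\eqref{nse-yK-intro}, $z$ satisfies
\[
\partial_t z=\nu\Delta z-\langle z\cdot\nabla\rangle z-\langle \widehat\bfy\cdot\nabla\rangle z-\langle z\cdot\nabla\rangle \widehat\bfy+\nabla q+U_M^\diamond K_M^\lambda z,
\qquad
\beta\partial_t z=-\fkD z-\nu\fkL z \ \text{ on } \partial\Omega .
\]
The forces $\bff$ and $\bfg$ cancel. We test the interior equation with $z$ and the boundary equation with $z\rest{\partial\Omega}$, and add the results. The Green formula~\eqref{barL1} makes the boundary term $(\fkL z,z)_{L^2(\partial\Omega)}$ produced by $\nu\Delta z$ cancel exactly with the one from the dynamic boundary condition. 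This gives the energy identity
\[
\tfrac12\tfrac{\rmd}{\rmd t}\bigl(\norm{z}{L^2(\Omega)}^2+\beta\norm{z}{L^2(\partial\Omega)}^2\bigr)
=-\nu\norm{\overline\fkL z}{L^2}^2-\langle\fkD z,z\rangle-((z\cdot\nabla)\widehat\bfy,z)-\lambda\norm{P_{\Pi\mathcal U_M}z}{H}^2 .
\]
Two interior terms vanish, $((z\cdot\nabla)z,z)=((\widehat\bfy\cdot\nabla)z,z)=0$, because the fields are divergence free and tangent to the boundary. We justify all these manipulations on the weak solutions constructed in Section~\ref{sec:well-posed}, where the energy inequality is available.

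The core estimate is the monotonicity property of Section~\ref{sec:monotStokes}. For $M\ge M_*$ and $\lambda\ge\lambda_*(M)$ it should give
\[
\tfrac\nu2\norm{\overline\fkL z}{L^2}^2+\lambda\norm{P_{\Pi\mathcal U_M}z}{H}^2\ \ge\ \xi\norm{z}{L^2(\Omega)}^2
\]
with $\xi$ as large as we wish. The proof splits $z$ into its component in $\Pi\mathcal U_M$ and the orthogonal complement. The complement is controlled by a Poincaré-type inequality whose constant grows with $M$, because the actuators become finer while their total volume is fixed; this uses the uniform extension property of Assumption~5.1. The component in $\Pi\mathcal U_M$ is then absorbed by taking $\lambda$ large.

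The convective term is handled with $\widehat\bfy\in L^\infty(\bbR_+,V)$, Hölder, Sobolev embeddings ($d\le3$) and Young's inequality:
\[
|((z\cdot\nabla)\widehat\bfy,z)|\le C\norm{\widehat\bfy}{V}\norm{z}{L^4}^2\le \tfrac\nu4\norm{\overline\fkL z}{L^2}^2+C'\norm{z}{L^2(\Omega)}^2 .
\]
If $\overline\fkL$ does not control the full $W^{1,2}$ norm, for instance for the curl operator on a multi-connected domain, we also need the boundary term and the harmonic-field correction of~\cite{Temam01}. We then choose $\xi\ge C'+\beta^{-1}\alpha$.

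On the boundary, \eqref{fkDalpha} gives $\langle\fkD z,z\rangle\ge\alpha\norm{z}{L^2(\partial\Omega)}^2=\beta^{-1}\alpha\cdot\beta\norm{z}{L^2(\partial\Omega)}^2$. Combining everything, the squared $X$-norm decays at rate at least $2\beta^{-1}\alpha$. Grönwall's inequality from time $s$ to time $t$ then yields~\eqref{goal-intro-exp} with $C=1$ and $\mu=\beta^{-1}\alpha$.

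I expect the main obstacle to be the monotonicity step. There we must show that the Poincaré-type constant relative to $\Pi\mathcal U_M$ blows up as $M\to\infty$ for the Helmholtz-projected actuators. The difficulty is that $\Pi$ is nonlocal and the admissible space only imposes tangency at the boundary, so the argument must deal with boundary cells of the partition and with the different operators $\overline\fkL$. I would first try working with the oblique/orthogonal projection onto $\mathcal U_M$ itself and bound the effect of $\Pi$ by using that $z\in H$.
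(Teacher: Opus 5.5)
\textbf{Overall structure.} Your architecture matches the paper's: the energy identity in the $\beta$-weighted norm, absorbing $((z\cdot\nabla)\widehat\bfy,z)$ into the dissipation, and monotonicity of Stokes plus feedback from a Poincar\'e constant that blows up as the partition is refined, using uniform extension operators.

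\textbf{The gap.} It lies in the step you call the core. An \emph{orthogonal} splitting $z=P_{\Pi\clU_M}z+(z-P_{\Pi\clU_M}z)$ of $z\in V$ cannot work, because $\Pi\clU_M\cap W^{1,2}(\Omega,\bbR^d)=\{0\}$. Indeed, $\curl\Pi\Phi_i=\curl\Phi_i$ is a nonzero measure carried by $\partial\omega_i$. For box-shaped $\omega_i$ with pairwise disjoint boundaries, no nontrivial combination of these measures is an $L^2$ function. Hence, whenever $P_{\Pi\clU_M}z\neq0$, neither summand lies in $V$. This breaks every step of your sketch:
\begin{enumerate}
\item The bound $\norm{v}{V}^2\ge\beta^V_{\fkP_h}\norm{v}{H}^2$ holds only for $v\in V\cap(\Pi\clU_M)^\perp$, so it says nothing about $z-P_{\Pi\clU_M}z$.
\item $\norm{z}{V}^2$ cannot be distributed over the two pieces.
\item Absorbing the $\Pi\clU_M$-component by large $\lambda$ would need an inverse inequality $\norm{\theta}{V}^2\le C_M\norm{\theta}{H}^2$ on a space that is not contained in $V$.
\end{enumerate}

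\textbf{The missing idea.} Split obliquely onto a \emph{regular} finite-dimensional space. The paper fixes $\widetilde\phi_j\in V$, $1\le j\le dM$, with $[(\Pi\Phi_i,\widetilde\phi_j)_H]$ invertible. This is possible by density of $V$ in $H$, once the $\Pi\Phi_i$ are known to be linearly independent; that independence is also what makes $[\clV_M]^{-1}$ in the feedback well defined. It gives $H=\widetilde\clU\oplus(\Pi\clU_M)^\perp$ with $\widetilde\clU=\linspan\{\widetilde\phi_j\}$. Now write $z=\vartheta+\theta$ with $\theta\in\widetilde\clU\subset V$ and $\vartheta=z-\theta\in V\cap(\Pi\clU_M)^\perp$. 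Then:
\begin{enumerate}
\item the Poincar\'e bound applies to $\vartheta$;
\item $\norm{\theta}{V}^2\le C_{2M}\norm{\theta}{H}^2$ by finite dimensionality;
\item $\norm{P_{\Pi\clU_M}z}{H}^2=\norm{P_{\Pi\clU_M}\theta}{H}^2\ge C_{1M}\norm{\theta}{H}^2$ with $C_{1M}>0$, since $\widetilde\clU\cap(\Pi\clU_M)^\perp=\{0\}$.
\end{enumerate}
So $\lambda\ge\lambda_*(M)$ absorbs $\theta$.

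A compactness argument also works, with a non-explicit $\lambda_*$. Suppose $\Lambda_1(\fkA_M^\lambda)<\zeta<\beta^V_{\fkP_h}$ for all $\lambda$. Then normalized near-minimizers are bounded in $V$ and satisfy $P_{\Pi\clU_M}v_\lambda\to0$. A strong $H$-limit $v$ then has $\norm{v}{H}=1$, $P_{\Pi\clU_M}v=0$ and $\norm{v}{V}^2\le\zeta$, contradicting the definition of $\beta^V_{\fkP_h}$.

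\textbf{What you got right.} The nonlocality of $\Pi$ that you single out is harmless, exactly as you suggest. For $h\in H$ one has $(h,\Pi g)_H=(h,g)_{L^2}$, so $V\cap(\Pi\clU_M)^\perp\subset W^{1,2}\cap\clU_M^\perp$.

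Your bookkeeping, which keeps all of $\langle\fkD z,z\rangle$, yields $\tfrac{\rmd}{\rmd t}\norm{z}{X}^2\le-2\beta^{-1}\alpha\norm{z}{X}^2$. That gives the stated rate $\beta^{-1}\alpha$ for the norm itself. The proof of Theorem~\ref{th:main} instead trades one copy of $\norm{Y}{\bfV}^2$, including its $\fkD$-part, against the convective term. It therefore literally obtains that rate only for the squared norm.
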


The proof will be presented in Section~\ref{sec:stabil}. A more general version of this result, stated as Theorem~\ref{th:main}, applies to arbitrary domains $\Omega$ and actuator supports $\omega$ satisfying some regularity conditions as formalized in Assumption~\ref{ass:domain}.

By Theorem~\ref{th:main-intro} we have that the stabilization decay rate $\mu = \beta^{-1}\alpha$ is achievable. Note that, as $\beta$ tends to~$0$, we see that the rate~$\mu$ diverges to infinity, suggesting that arbitrarily fast stabilization can be obtained in the limit of static boundary conditions. This behavior is consistent with existing results in the literature stating that under static ($\beta = 0$) boundary conditions, arbitrarily large stabilization rates are achievable, as demonstrated for curl-type static boundary conditions in~\cite{RodSeifu24}.

In Theorem~\ref{th:main-intro}, the fact that~\eqref{goal-intro-exp} holds with~$C=1$ is interesting, because it implies that the norm~$\norm{\bfY(\cdot,t) -\widehat\bfY(\cdot,t) }{X}$ is strictly decreasing.

	\section{Mathematical setting}\label{sec:mathSett}
 We recall the operator~$\overline\fkL$ in~\eqref{barL2} and the space~$H$ in~\eqref{spaceH} and introduce the spaces
  \begin{subequations}\label{spacesHV}
  \begin{align}
 &\bfH= H\times L^2(\partial\Omega,\bfT\partial\Omega), &&\\
 &V\coloneqq\{h\in H\mid\; \overline\fkL h\in L^2(\Omega,\bbR^{d_1})\},\quad\bfV\coloneqq \{(h,\varphi)\in V\times L^2(\partial\Omega,\bfT\partial\Omega)\mid h\rest{\partial\Omega}=\varphi\}. 
 \end{align}
 \end{subequations}
 Following a standard procedure, we start by projecting the operators defining the dynamics onto~$H$. In this way we can write~\eqref{nse-haty-intro} and~\eqref{nse-yK-intro} as evolutionary equations in~$H$. Further,  by considering $y\coloneqq\bfy-\widehat\bfy$ the stabilization of~$\bfy$ to~$\widehat\bfy$ is reduced to the stabilization of~$y$ to zero. The evolutionary equation for~$(y,\varphi)\in \bfV$ reads
  \begin{subequations}\label{nse-dif}
\begin{align}
&\dot y =\nu\Pi\Delta y -B(\widehat\bfy,y)-B(y,\widehat\bfy)-B(y,y)  +\Pi U_m^\diamond K_m^\lambda y,\qquad \diver y = 0, \label{nse-dif-dyn}\\
& \beta\dot \varphi =  -\fkD \varphi-\nu\fkL y,\qquad ( y  \cdot \bfn)\rest{\partial\Omega} = 0,\label{nse-dif-bdry}\\
&(y(0),\varphi(0)) =(y_0,\varphi_0)\in \bfH. \label{nse-dif-ic}
\end{align}
\end{subequations} 
and our goal~\eqref{goal-intro-exp} becomes, when~$X=\bfH$,
\begin{equation}\label{goal-dif-exp}
\norm{(y,\varphi)(t) }{\bfH}\le C\rme^{-\mu(t-s)}\norm{(y,\varphi)(s)}{\bfH},\quad\mbox{for all }t\ge s\ge 0.
\end{equation}

Hereafter, we require the following. 
\begin{assumption}\label{ass:Green}
    The formula~\eqref{barL1} holds for smooth vector fields in~$V$.
\end{assumption}
\begin{assumption}\label{ass:VnormW12}
    We have that~$z\mapsto(\norm{\overline\fkL z}{L^2}^2+\norm{z}{L^2}^2)^{\frac12}$ defines a norm on~$V$ equivalent to the Sobolev $W^{1,2}(\Omega,\bbR^d)$-norm.
\end{assumption}
\begin{assumption}\label{ass:VcompH}
    We have that~$\bfV\xhookrightarrow{\rm d,c}\bfH$.
\end{assumption}

\subsection{The extended shifted Stokes operator}
Often we shall denote elements~$(w,\phi)\in\bfH$ in matrix form. With such notation, we define the bilinear form
\begin{subequations}\label{form4Stokes0}
\begin{align}
  &\fka\colon \bfV\times \bfV\to\bbR, \\
  &\fka\left(\begin{bmatrix}
      w\\\phi
  \end{bmatrix},\begin{bmatrix}
      z\\\psi
  \end{bmatrix}\right)\coloneqq \nu(\overline\fkL w,\overline\fkL z)_{L^2(\Omega,\bbR^{d_1})} +(w,z)_{L^2(\Omega,\bbR^{d})}+(\fkD\phi,\psi)_{L^2(\partial\Omega,\bfT\partial\Omega)},
\end{align}
which, by Assumption~\ref{ass:VcompH} and~\eqref{fkDalpha}, defines a scalar product on~$\bfV$.
\end{subequations}

Now, we consider the extended Stokes operator~$A$ defined by
\begin{subequations}\label{operA0Prod}
 \begin{equation}\label{operA0}
\left\langle A \begin{bmatrix}
      w\\\phi
  \end{bmatrix},\begin{bmatrix}
      z\\\psi
  \end{bmatrix}\right\rangle_{\bfV',\bfV}\coloneqq \fka\left(\begin{bmatrix}
      w\\\phi
  \end{bmatrix},\begin{bmatrix}
      z\\\psi
  \end{bmatrix}\right).
\end{equation}

From now on, we consider~$\bfV$ endowed with the scalar product
\begin{equation}\label{scprodA0}
\left(\begin{bmatrix}
      w\\\phi  \end{bmatrix},\begin{bmatrix}
      z\\\psi  \end{bmatrix}\right)_\bfV\coloneqq\fka\left(\begin{bmatrix}
      w\\\phi  \end{bmatrix},\begin{bmatrix}
      z\\\psi  \end{bmatrix}\right)
\end{equation}
\end{subequations}

\begin{lemma}\label{lem:isomVH}
    The operator~$A\colon\bfV\mapsto\bfV'$ is a bijective isometry.
\end{lemma}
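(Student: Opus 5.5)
This is the Riesz representation theorem applied to the Hilbert space $(\bfV,(\cdot,\cdot)_\bfV)$, where the scalar product is the form $\fka$ from~\eqref{scprodA0}. The plan is to first confirm that $\bfV$ is a Hilbert space under this scalar product, and then read off bijectivity and the isometry property from the Riesz map.

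\emph{Step 1: $(\bfV,\fka)$ is a Hilbert space.} Symmetry and bilinearity of $\fka$ come from the symmetry of $\fkD$. Positive definiteness follows from~\eqref{fkDalpha}: for $(w,\phi)\in\bfV$ we have
\[
\fka\bigl((w,\phi),(w,\phi)\bigr)\ge \nu\norm{\overline\fkL w}{L^2}^2+\norm{w}{L^2}^2+\alpha\norm{\phi}{L^2(\partial\Omega,\bfT\partial\Omega)}^2 .
\]
For completeness, take a Cauchy sequence $(w_n,\phi_n)$ in the $\fka$-norm. By Assumption~\ref{ass:VnormW12}, $w_n$ is Cauchy in $W^{1,2}(\Omega,\bbR^d)$, so $w_n\to w$ in $W^{1,2}$ with $w\in V$, since $V$ is closed there: $H$ is closed in $L^2$ and $\overline\fkL$ is bounded on $W^{1,2}$ by~\eqref{barL2}. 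Also $\phi_n$ is Cauchy in $\fkV$ with respect to the norm $\langle\fkD\cdot,\cdot\rangle^{1/2}$, and $\fkV$ is complete for this norm, being the energy space of the symmetric positive definite $\fkD\colon\fkV\to\fkV'$. Continuity of the trace $W^{1,2}\to L^2(\partial\Omega)$ gives $\phi_n=w_n\rest{\partial\Omega}\to w\rest{\partial\Omega}$ in $L^2$. Since $\phi_n\to\phi$ in $\fkV\xhookrightarrow{}L^2$, we get $\phi=w\rest{\partial\Omega}$, so $(w,\phi)\in\bfV$ and the sequence converges to it.

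This step is the main obstacle. Its delicate points are making sure that $\bfV$ is implicitly contained in $V\times\fkV$, so that $\fka$ is finite, and the identification of limits in the two components. I would state explicitly that the $\fkD$-term is understood as $\langle\fkD\phi,\psi\rangle_{\fkV',\fkV}$ and that $\fkV$ carries its energy norm.

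\emph{Step 2: the Riesz map.} By~\eqref{operA0}, $\langle A\bfw,\bfz\rangle_{\bfV',\bfV}=(\bfw,\bfz)_\bfV$, so $A$ is exactly the Riesz map of the Hilbert space $\bfV$. Cauchy--Schwarz gives $\norm{A\bfw}{\bfV'}\le\norm{\bfw}{\bfV}$, and testing with $\bfz=\bfw$ gives equality, so $A$ is a linear isometry and in particular injective. Surjectivity is the Riesz representation theorem: every $f\in\bfV'$ equals $(\bfw_f,\cdot)_\bfV$ for some $\bfw_f\in\bfV$, and hence $f=A\bfw_f$. This proves Lemma~\ref{lem:isomVH}.
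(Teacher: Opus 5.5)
Your proof is correct and takes the same route as the paper: there, $\fka$ is the scalar product of $\bfV$, so $A$ is bijective by the Lax--Milgram (here equivalently Riesz) lemma and is an isometry directly from~\eqref{operA0Prod}. Your Step~1 additionally proves that $(\bfV,\fka)$ is complete, which the paper assumes without comment; note that this step relies on your reading of $\fkV$ as the energy space of $\fkD$, with $\bfV\subset V\times\fkV$, a convention the paper uses implicitly but never states.
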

\begin{proof}
The operator~$A$ is bijective because it defines a scalar product on~$\bfV$ and due to the Lax-Milgram lemma. That $A$ is an isometry follows straighforwardly from~\eqref{operA0Prod}.
\end{proof}

As usual the domain of~$A$ is defined as
\[
\rmD(A)\coloneqq\left\{\left.\begin{bmatrix}
      w\\\phi
  \end{bmatrix}\in\bfH\;\right|\; A\begin{bmatrix}
      w\\\phi
  \end{bmatrix}\in\bfH\right\}.
\]
\begin{lemma}
The domain of~$A$ is given by
\[\rmD(A)=D_A\coloneqq\left\{\left.\begin{bmatrix}
      w\\\phi
  \end{bmatrix}\in\bfV\; \right|\;\Pi\Delta w\in H\mbox{ and }\fkD\phi +\nu\fkL w \in L^2(\partial\Omega,\bfT\partial\Omega)\right\}.\]
\end{lemma}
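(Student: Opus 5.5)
The plan is to identify $A(w,\phi)$, for $(w,\phi)\in\bfV$, with a pair of distributions by means of the Green formula~\eqref{barL1}, and then read off the two components. Since $(w,\phi)\in\rmD(A)$ already forces $(w,\phi)\in\bfV$ (because $A$ maps $\bfV$ to $\bfV'$ by Lemma~\ref{lem:isomVH}), both inclusions reduce to one question: when is the functional $(z,\psi)\mapsto\fka((w,\phi),(z,\psi))$ represented by an element $(h,\eta)\in\bfH$?

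\emph{Inclusion $D_A\subseteq\rmD(A)$.} Let $(w,\phi)\in D_A$, and assume first that $w$ is smooth. Take $(z,\psi)\in\bfV$ with $z$ smooth, so that $\psi=z\rest{\partial\Omega}$. By Assumption~\ref{ass:Green},
\[
\nu(\overline\fkL w,\overline\fkL z)_{L^2}=-\nu(\Pi\Delta w,z)_{H}+\nu(\fkL w,\psi)_{L^2(\partial\Omega,\bfT\partial\Omega)},
\]
where $\Delta w$ may be replaced by $\Pi\Delta w$ because $z\in H$. Therefore
\[
\fka\bigl((w,\phi),(z,\psi)\bigr)=\bigl((-\nu\Pi\Delta w+w,\;\fkD\phi+\nu\fkL w),(z,\psi)\bigr)_{\bfH}.
\]
The right-hand side is continuous in $(z,\psi)$ for the $\bfH$-norm whenever $\Pi\Delta w\in H$ and $\fkD\phi+\nu\fkL w\in L^2$. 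Smooth elements are dense in $\bfV$, so the identity extends to all $(z,\psi)\in\bfV$, and hence $A(w,\phi)=(-\nu\Pi\Delta w+w,\fkD\phi+\nu\fkL w)\in\bfH$. For nonsmooth $w$, the terms $\Pi\Delta w$ and $\fkL w$ must be read in the distributional and trace sense: $\Pi\Delta w\in V'$, and $\fkL w$ is defined through the generalized Green formula, i.e.\ \eqref{barL1} is taken as the definition of $\fkL w$ in the dual of the trace space. I would state this interpretation at the outset, since it is what makes $D_A$ meaningful.

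\emph{Inclusion $\rmD(A)\subseteq D_A$.} Let $A(w,\phi)=(h,\eta)\in\bfH$. First test with $(z,0)$, where $z\in H$ is smooth with compact support in $\Omega$; these lie in $\bfV$ since their trace vanishes. This yields $\nu(\overline\fkL w,\overline\fkL z)+(w,z)=(h,z)$, that is, $-\nu\Pi\Delta w+w=h$ in the distributional sense on divergence-free test fields, so $\Pi\Delta w\in H$. Next subtract this interior identity from the full identity for general $(z,\psi)\in\bfV$. By the generalized Green formula, what remains is $(\nu\fkL w+\fkD\phi-\eta,\psi)=0$ for all traces $\psi$ of elements of $\bfV$. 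These traces are dense in $L^2(\partial\Omega,\bfT\partial\Omega)$ by Assumption~\ref{ass:VcompH}, which gives $\fkD\phi+\nu\fkL w=\eta\in L^2$.

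The main obstacle is making rigorous sense of $\fkL w$ and of the Green formula for $w\in V$ that are only $W^{1,2}$ (Assumption~\ref{ass:VnormW12}), together with the density of smooth fields in $\bfV$. My approach would be to define $\fkL w$ as the element of the dual of the trace space $\{z\rest{\partial\Omega}: z\in V\}$ determined by~\eqref{barL1}, with $\Pi\Delta w$ understood in $V'$, and to check that this definition is independent of the chosen extension. That independence follows from the interior identity obtained with compactly supported test fields.
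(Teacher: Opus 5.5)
Your proposal is correct and follows the same route as the paper. The inclusion $D_A\subseteq\rmD(A)$ comes from the Green formula~\eqref{barL1}. For the reverse inclusion you test first with $(z,0)$, $z$ smooth, divergence free and compactly supported, to get $\Pi\Delta w\in H$, and then with general $(z,\psi)\in\bfV$ to get $\fkD\phi+\nu\fkL w\in L^2(\partial\Omega,\bfT\partial\Omega)$; the paper does the same, and your added care (defining $\fkL w$ for nonsmooth $w$ via the generalized Green formula once $\Pi\Delta w\in H$ is known, and using density of smooth fields and of traces) fills in points the paper's proof leaves implicit.
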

\begin{proof}

Let~$(z,\psi)\in D_A$ be arbitrary. By Assumption~\ref{ass:Green} we have~\eqref{barL1}, which combined with~\eqref{form4Stokes0} and~\eqref{operA0Prod} gives us that~$(z,\psi)\in\rmD(A)$ and, consequently,~$D_A\subseteq\rmD(A)$. Next, let~$(w,\phi)\in\rmD(A)$ be arbitrary. Given~$z\in V^\infty_\rmc\coloneqq\{h\in V\mid h\text{ smooth with support}\supp(h)\subset\Omega\}$, we find that~$\left\langle A \begin{bmatrix}
      w\\\phi
  \end{bmatrix},\begin{bmatrix}
      z\\0
  \end{bmatrix}\right\rangle=-\nu\langle \Delta w,z\rangle$. Then, since~$A(w,\phi)\in\bfH$ and~$V^\infty_\rmc$ is dense in~$H$ we must have that~$\Pi\Delta w\in H'=H$. Further, using~\eqref{barL1} we  have~$\left\langle A \begin{bmatrix}
      w\\\phi
  \end{bmatrix},\begin{bmatrix}
      z\\\psi
  \end{bmatrix}\right\rangle=-\nu\langle \Delta w,z\rangle+\nu\langle\fkL w,\psi\rangle+\langle w,z\rangle+\langle\fkD \phi,\psi\rangle $. Again, since~$A(w,\phi)\in\bfH$ we have that~$\fkD \phi+\nu\fkL w\in L^2(\partial\Omega,\bfT\partial\Omega)$. Therefore, we have~$\rmD(A)\subseteq D_A$.
\end{proof}

By Assumption~\ref{ass:VcompH} and Lemma~\ref{lem:isomVH}, it also follows that~$A\colon\rmD(A)\to \bfH$ is an isometry and that~$\rmD(A)\xhookrightarrow{\rm c,d}\bfH$. Consequently, $A^{-1}\colon \bfH\to\bfH$ is compact and we conclude that we have a sequence of eigenpairs~$((\gamma_j,e_j))_{j\in\bbN_+}$,   
\begin{equation}\label{eigenpairs}
Ae_j=\gamma_je_j,\qquad 0<\gamma_1\le....\le\gamma_j \le \gamma_{j+1},\quad\gamma_j\to\infty,
\end{equation}
where~$e_j$ is an orthonormal complete basis in~$\bfH$.

\subsection{The reaction-convection components}
We define the linear operator
\begin{equation}\label{operArc}
         A_{\rm rc}\colon \bfV\to\bfV',\qquad
   A_{\rm rc}\begin{bmatrix}
      w\\\phi  \end{bmatrix}\coloneqq\begin{bmatrix}B(\widehat\bfy,w) + B(w,\widehat\bfy) \\ 0 \end{bmatrix}-\begin{bmatrix} w\\0\end{bmatrix}
\end{equation}
and the nonlinearity
\begin{equation}\label{operN}
         \clN\colon \bfV\to\bfV',\qquad
   \clN\left(\begin{bmatrix}
      w\\\phi
  \end{bmatrix}\right)\coloneqq\begin{bmatrix} B(w,w) \\ 0 \end{bmatrix}.
\end{equation}

\subsection{The feedback control forcing}
We write the control forcing as
\begin{equation}\label{operFeed}
         F_m^\lambda\colon \bfV\to\bfV',\qquad
   F_m^\lambda\begin{bmatrix}
      w\\\phi  \end{bmatrix}\coloneqq\begin{bmatrix}\Pi U_m^\diamond K_m^\lambda w\\0\end{bmatrix},
\end{equation}
with~$K_m^{\lambda}$ as in~\eqref{Feed-intro}.

\subsection{Evolutionary extended system}
Finally, recalling~\eqref{barL1}, we write~\eqref{nse-dif} as  \begin{subequations}\label{nse-dif-evol}
\begin{align}
&\tfrac{\rmd}{\rmd t}\begin{bmatrix}
      y\\\beta\varphi
  \end{bmatrix}=-A Y-A_{\rm rc} Y-\clN\left( Y\right)+F_m^\lambda Y,\qquad Y\coloneqq\begin{bmatrix}
      y\\\varphi
  \end{bmatrix},\\
&  Y(0)=Y_0,
\end{align}
\end{subequations}
and we recall also that our goal is to show that the feedback forcing~$F_m^\lambda$ stabilizes the system, as in~\eqref{goal-dif-exp}, for large enough~$m$ and~$\lambda$.

\begin{remark}
    The term~$(w,z)_H$ added in~\eqref{form4Stokes0} is useful to guarantee the positive-definiteness of~$\fka$. It is equivalent to add~$(w,\varphi)\mapsto(w,0)$ to the Stokes operator. Note than this term is then subtracted in the reaction-convection term in~\eqref{operArc}.
\end{remark}

\section{Existence of solutions}\label{sec:well-posed}
The existence of variational solutions can be shown by a limit of Galerkin approximations as in \cite[Ch.~3, Sect.~3.2]{Temam01}.
We consider a more general class of systems in order to include both the free~\eqref{nse-haty-intro} and controlled~\eqref{nse-dif-evol} dynamics, for this purpose, we consider~\eqref{nse-dif-evol} with an external forcing~$(f,g)\in L^2_{\rm loc}(\bbR_+,\bfH)$ as follows.
\begin{subequations}\label{nse-gen-evol}
\begin{align}
&\tfrac{\rmd}{\rmd t}\begin{bmatrix}
      y\\\beta\varphi
  \end{bmatrix}=-A Y-A_{\rm rc} Y-\clN\left( Y\right)+F_m^\lambda Y+\begin{bmatrix}
     f \\ g
  \end{bmatrix},\qquad Y=\begin{bmatrix}
      y\\\varphi
  \end{bmatrix},\\
&  Y(0) = Y_0\in \bfH.
\end{align}
\end{subequations}

We make the following regularity assumption.
\begin{assumption}\label{ass:target}
 The target trajectory $\widehat{\bfy}$, solving~\eqref{nse-haty-intro}, satisfies
 \begin{equation*}
  \norm{\nabla\widehat\bfy}{ L^\infty(\bbR_+,L^2(\Omega,\bbR^{d^2}))}\eqqcolon\widehat C_2<\infty.   
 \end{equation*}
  \end{assumption} 

Following a standard strategy, we seek  an approximate solution~$Y^N(t)=(y^N,\varphi^N)(t)\in \clE_N$ evolving  in the finite-dimensional space~$\clE_N\coloneqq\linspan\{e_j\mid 1\le j\le N\}$ spanned by the first eigenfunctions~$e_j$ as in~\eqref{eigenpairs}. Namely, solving
\begin{subequations}\label{nse-GalN}
\begin{align}
&\tfrac{\rmd}{\rmd t}\begin{bmatrix}
      y^N \\ \beta\varphi^N
  \end{bmatrix} = P_N\left(-A Y^N - A_{\rm rc} Y^N- \clN \left( Y^N \right) + F_m^\lambda Y^N+\begin{bmatrix}
      f \\ g
  \end{bmatrix}\right),\\
&  Y^N(0)=P_NY^N_0,\qquad Y^N=\begin{bmatrix}
      y^N\\\varphi^N
  \end{bmatrix},
\end{align}
\end{subequations}
where~$P_N\colon \bfH\to \clE_N$ is the orthogonal projection in~$\bfH$ onto~$\clE_N$. Existence of solutions for this finite-dimensional system follows from Carath\'eodory  theorem~\cite[Ch.~I, Thm.~5.3]{Hale80}.

Now, for~$(z,\phi)\in\bfH$, we define
\begin{equation}\label{normbetaH}
    \norm{(z,\phi)}{\bfH,\beta}^2\coloneqq \norm{z}{H}^2 + \beta\norm{\phi}{L^2(\partial\Omega,\bfT\partial\Omega)}^2.
\end{equation}
By testing the dynamics with~$2Y^N$, we obtain 
\begin{align*}
\tfrac{\rmd}{\rmd t} \norm{Y^N}{\bfH,\beta}^2&=- 2\norm{Y^N}{\bfV}^2-2\langle A_{\rm rc} Y^N+F_m^\lambda Y^N+f,y^N\rangle_{\bfV',\bfV}+2( g,\varphi^N)_{L^2(\partial\Omega,\bfT\partial\Omega)}
\\
&\le  - 2\norm{Y^N}{\bfV}^2+2C_{\rm rc}\norm{y^N}{H}^{\frac12}\norm{y^N}{V}^{\frac32}+2\norm{y^N}{H}^{2}+2\norm{F_m^\lambda}{\clL(H)}\norm{y^N}{H}^2\\
&\quad+2\norm{f}{V'}\norm{y^N}{V}+2\norm{g}{L^2(\partial\Omega,\bfT\partial\Omega)}\norm{\varphi^N}{L^2(\partial\Omega,\bfT\partial\Omega)}
\end{align*}
and, by the Young inequality,
\begin{align*}
\tfrac{\rmd}{\rmd t} \norm{Y^N}{\bfH,\beta}^2
&\le  - \norm{Y^N}{\bfV}^2 +\left(\tfrac14\Bigl((\tfrac{2}{3})^{-\frac34}C_{\rm rc}\Bigr)^4+2+2\norm{F_m^\lambda}{\clL(H)}\right)\norm{y^N}{H}^2\\
&\quad+\norm{\varphi^N}{L^2(\partial\Omega,\bfT\partial\Omega)}^2+2\norm{f}{V'}^2+\norm{g}{L^2(\partial\Omega,\bfT\partial\Omega)}^2
\\
&\le  - \norm{Y^N}{\bfV}^2 +C_1\norm{Y^N}{\bfH,\beta}^2+2\norm{f}{V'}^2+\norm{g}{L^2(\partial\Omega,\bfT\partial\Omega)}^2
\end{align*}
with~$C_1\coloneqq \left(\tfrac14((\tfrac{2}{3})^{-\frac34}C_{\rm rc})^4+2+2\norm{F_m^\lambda}{\clL(H)}+\beta^{-1}\right)$. The Gronwall lemma and time integration give us
\begin{align}\label{bddGalN-infH2V}
&\norm{Y^N}{L^\infty(0,T;\bfH)}^2+\norm{Y^N}{L^2(0,T;\bfV)}^2\le C_2,
\end{align}
with~$C_2$ independent of~$N$ and depending on
\[
\norm{(Y_0,f,g)}{\bfH\times L^2(0,T;\bfV')\times L^2(0,T;L^2(\partial\Omega,\bfT\partial\Omega)}.
\]
By the dynamics in~\eqref{nse-GalN} and~$\bfV\xhookrightarrow{}\bfH$, we find
\begin{align*}
\norm{\dot Y^N}{\bfV'}&\le \norm{Y^N}{\bfV}+\norm{A_{\rm rc} Y^N}{\bfV'}+\norm{\clN\left( Y^N\right)}{\bfV'}+\norm{F_m^\lambda Y^N}{\bfV'}+\norm{(f,g)}{V'\times L^2(\partial\Omega,\bfT\partial\Omega)}\\
&\le C_3\norm{Y^N}{\bfV}+\norm{\clN\left( Y^N\right)}{\bfV'}+\norm{(f,g)}{V'\times L^2(\partial\Omega,\bfT\partial\Omega)}\\
&\le C_3\norm{Y^N}{\bfV}+C_4\norm{Y^N}{\bfV}^\frac32\norm{Y^N}{\bfH}^\frac12+\norm{(f,g)}{V'\times L^2(\partial\Omega,\bfT\partial\Omega)}
\end{align*}
which gives us, using~\eqref{bddGalN-infH2V},
\begin{align*}
\norm{\dot Y^N}{L^{\frac43}(0,T;\bfV')}&\le C_5\norm{(Y^N,f,g)}{L^{2}(0,T;\bfV\times V'\times L^2(\partial\Omega,\bfT\partial\Omega))}\le C_6
\end{align*}
with~$C_6$ independent of~$N$.

By the Aubin--Lions--Simon lemma in~\cite[Sect.~9, Cor.~6]{Simon86} we have that
\begin{align}
\clW\coloneqq\{h\in L^2(0,T;\bfV)\mid h\in L^\infty(0,T;\bfH)\mbox{ and }\dot h\in L^{\frac43}(0,T;\bfV')\}
\xhookrightarrow{\rm c} L^2(0,T;\bfH).
\end{align}

Now, we can take a subsequence (still denoted~$(Y^N)_{N\in\bbN_+}$) so that
\begin{align}
&Y_N\xrightharpoonup[L^2(0,T;\bfV)]{} Y_\infty,\quad &&Y_N\xrightharpoonup[L^\infty(0,T;\bfH)\;]{}\hspace{-.5em}{_*}\; Y_\infty,\\
&\dot Y_N\xrightharpoonup[L^{\frac43}(0,T;\bfV')]{} Y_\infty,\quad &&Y_N \xrightarrow[L^2(0,T;\bfH)]{}Y_\infty,
\end{align}
for some~$Y^\infty\in\clW$. From this (using~\cite[Lem.~3.2]{Temam01} for the nonlinear term) we can pass to the limit of the dynamics in 
$L^{\frac43}(0,T;\bfV')$ to obtain
\begin{align*}
&\tfrac{\rmd}{\rmd t}\begin{bmatrix}
      y^\infty \\ \beta \varphi^\infty
  \end{bmatrix} = -A Y^\infty-A_{\rm rc} Y^\infty-\clN( Y^\infty) + F_m^\lambda Y^\infty + \begin{bmatrix}
      f \\ g
  \end{bmatrix},\qquad Y^\infty=\begin{bmatrix}
      y^\infty\\\varphi^\infty
  \end{bmatrix}.
\end{align*}
Finally, 
the relation~$Y^\infty(0)=Y_0$ is justified by following the arguments in~\cite[Ch.~3, Sect.~3.2, Eqs.~(3.43)--(3.44)]{Temam01}. That is, $Y^\infty\in\clW$ solves~\eqref{nse-gen-evol}.

\section{Monotonicity of the sum Stokes plus feedback}\label{sec:monotStokes}
We derive auxiliary properties for the operator
\begin{subequations}\label{fkA}
\begin{equation}
\fkA_m^\lambda\coloneqq A_\rmS-2\Pi U_m^\diamond K_m^\lambda=A_\rmS+2\lambda P_{\Pi\clU_m},
\end{equation}
with~$K_m^\lambda$ as in~\eqref{Feed-intro} and
\begin{equation}
\langle A_\rmS v,z\rangle_{V',V}\coloneqq \nu(\overline\fkL v,\overline\fkL z)_{L^2(\Omega,\bbR^{d_1})}+(v,z)_{L^2(\Omega,\bbR^{d})}.
\end{equation}
\end{subequations}

\subsection{Scalar actuators}

We need the following assumption on~$\Omega$ and the reference support~$\omega_*$ of the actuators.
\begin{assumption}\label{ass:domain}
    The open domains~$\Omega$ and~$\omega_*$ satisfy the following: for every~$h>0$, there exists a partition~$\fkP_h=\{\clO_i\mid 1\le i\le m\}$ of~$\Omega\subset\bbR^d$,
    $\overline\Omega=\cup_{i=1}^m\overline\clO_i$,
    with pairwise disjoint open domains $\clO_i$, such that up to 
    a translation and a rotation of~$\clO_i$.
     \[h\omega_*\subset \clO_i\subset h\fkB_1,
     \]
    and there exist extension operators $E_i$ such that
    \begin{equation}
    E_i\colon W^{1,2}(\clO_i)\to W^{1,2}(\bbR^d) \quad  \text{ with } \quad \norm{E_i}{\clL(W^{1,2}(\clO_i)\to W^{1,2}(\bbR^d))}\le C_E    
    \end{equation}
     with uniform bound, that is, with~$C_E$ independent of~$i$ and~$h$.
\end{assumption}

Now we state the main result of this section.
\begin{theorem}\label{thm:Eig1-AFeed}
Let Assumption~\ref{ass:domain} hold true and let~$\zeta>0$. Then,  there exists~$h_*>0$ such that for every $h\le h_*$ and every partition~$\fkP_h=\{\clO_i\mid 1\le i\le m\}$ of~$\Omega\subset\bbR^d$ as in Assumption~\ref{ass:domain},
there exists~$\lambda_*>0$ such that for every~$\lambda\ge\lambda_*$, we have
\[
\Lambda_1(\fkA_m^\lambda)\coloneqq\min_{v\in V}\frac{\langle \fkA_m^\lambda v,v\rangle_{V',V}}{\norm{v}{H}}\ge\zeta.
\]
\end{theorem}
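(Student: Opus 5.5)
The plan follows the standard strategy of~\cite{Rod21-aut,RodSeifu24}: a Poincaré-type inequality for functions ``orthogonal to the actuators'', combined with a decomposition of~$V$. (The denominator in~$\Lambda_1$ is read as~$\norm{v}{H}^2$, the Rayleigh quotient.) Since~$\langle\fkA_m^\lambda v,v\rangle=\nu\norm{\overline\fkL v}{L^2}^2+\norm{v}{H}^2+2\lambda\norm{P_{\Pi\clU_m}v}{H}^2$, it suffices to find~$h_*$ such that, for~$h\le h_*$,
\begin{equation*}
\nu\norm{\overline\fkL v}{L^2}^2+\norm{v}{H}^2\ge 2\zeta\norm{v}{H}^2\quad\text{for all } v\in V\text{ with } P_{\Pi\clU_m}v=0.
\end{equation*}
This handles the orthogonal complement of~$\Pi\clU_m$. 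A splitting argument then treats general~$v$ once~$\lambda$ is large.

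\textbf{Step 1 (Poincaré inequality).} Let~$v\in V$ with~$(v,\Pi\Phi_i)_H=0$ for all~$i$. Since~$v\in H$, we have~$(v,\Pi\Phi_i)_H=(v,\Phi_i)_{L^2}$, so~$\int_{\omega_i}v_j\,\rmd x=0$ for every component~$j$ and every cell~$\clO_i$.
\begin{itemize}
\item On each cell, rescale to unit size. By the uniform extension property of Assumption~\ref{ass:domain} and a compactness (Deny--Lions) argument on~$\fkB_1\supset\omega_*$, we get~$\norm{w}{L^2(\clO_i)}\le C_Ph\norm{\nabla w}{L^2(\clO_i)}$ for scalar~$w$ with zero mean on~$h\omega_*$, with~$C_P$ independent of~$i$ and~$h$.
\item Sum over cells to obtain~$\norm{v}{H}^2\le C_P^2h^2\norm{\nabla v}{L^2}^2$.
\item By Assumption~\ref{ass:VnormW12}, $\norm{\nabla v}{L^2}^2\le C(\norm{\overline\fkL v}{L^2}^2+\norm{v}{H}^2)$. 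Hence~$\norm{v}{H}^2\le C'h^2(\norm{\overline\fkL v}{L^2}^2+\norm{v}{H}^2)$, so~$\nu\norm{\overline\fkL v}{L^2}^2+\norm{v}{H}^2\ge c\,h^{-2}\norm{v}{H}^2$ for small~$h$.
\item Choose~$h_*$ with~$ch_*^{-2}\ge 4\zeta+1$.
\end{itemize}
The main technical point is the uniformity of~$C_P$. It comes from the scaling together with the uniform constant~$C_E$. Rotations and translations of~$\clO_i$ are harmless since the zero-mean condition is invariant under them.

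\textbf{Step 2 (general~$v$).} Write~$v=q+\theta$ with~$q=P_{\Pi\clU_m}v$ and~$\theta=v-q$.
\begin{itemize}
\item The actuators are indicator functions, so~$q$ need not lie in~$V$. This is the expected main obstacle. I would first try to avoid the decomposition inside~$V$ by a direct contradiction/compactness argument on the fixed finite-dimensional space~$\Pi\clU_m$.
\item Suppose the bound fails for~$\lambda_n\to\infty$. Then there are~$v_n$ with~$\norm{v_n}{H}=1$, $\nu\norm{\overline\fkL v_n}{}^2+1+2\lambda_n\norm{P v_n}{}^2<\zeta$, where~$P\coloneqq P_{\Pi\clU_m}$.
\item So~$v_n$ is bounded in~$V$ and~$Pv_n\to0$. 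Extract~$v_n\rightharpoonup v$ in~$V$ and, by compactness of~$V\hookrightarrow H$, $v_n\to v$ strongly in~$H$. Then~$\norm{v}{H}=1$ and~$Pv=0$.
\item Lower semicontinuity gives~$\nu\norm{\overline\fkL v}{}^2+1\le\zeta<2\zeta$, contradicting Step~1.
\end{itemize}
This yields the existence of~$\lambda_*$ for each fixed~$h\le h_*$ and partition, with room to spare, which is what the theorem asserts.
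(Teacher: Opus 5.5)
Your proof is correct. Step~1 is essentially the paper's Lemma~\ref{lem:poincare0} together with Corollaries~\ref{cor:poincare-L2}--\ref{cor:poincare-AVH}, made quantitative: you get an explicit lower bound of order $h^{-2}$, whereas the paper only shows $\beta^V_{\fkP_h}\to\infty$. One small point there should be made explicit. Assumption~\ref{ass:domain} bounds the extension operators in the full, unscaled $W^{1,2}$-norm, so Deny--Lions on $h\fkB_1$ actually gives $\norm{w}{L^2(\clO_i)}\le C_0C_Eh\,(\norm{w}{L^2(\clO_i)}+\norm{\nabla w}{L^2(\clO_i)})$. You then have to absorb the $L^2$ term on the left. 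This is possible only for $h$ small, which is exactly the regime you need, so your uniform $C_P$ holds for $h\le h_0$.

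The genuine difference from the paper is in Step~2. The paper handles the fact that $\Pi\clU_m\not\subset V$ constructively:
\begin{itemize}
\item It picks regular $\widetilde\phi_j\in V$ with $[(\Pi\Phi_i,\widetilde\phi_j)_H]$ invertible. This first requires proving that the Gram matrix $[\clV_m]$ is invertible, i.e.\ that the $\Pi\Phi_i$ are linearly independent, which the paper does via a curl/indicator-function argument.
\item It splits $y=\vartheta+\theta$ with the oblique projection onto $\widetilde\clU\subset V$ along $(\Pi\clU_m)^\perp$, so that both pieces lie in $V$.
\item It uses $P_{\Pi\clU_m}y=P_{\Pi\clU_m}\theta$ and combines the Poincar\'e bound for $\vartheta$ with norm equivalences on the finite-dimensional space $\widetilde\clU$.
\item This yields the explicit threshold $\lambda_*\ge(2C_{1m})^{-1}(2\zeta+C_{2m})$.
\end{itemize}

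Your compactness/contradiction argument bypasses all of this. It needs neither the auxiliary space nor the linear independence of the $\Pi\Phi_i$. The orthogonal projection $P_{\Pi\clU_m}$ is bounded on $H$ regardless, so $P_{\Pi\clU_m}v_n\to P_{\Pi\clU_m}v$ follows from strong $H$-convergence. Your route is therefore shorter and more robust.

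The price is that it is purely existential: it says nothing about the size of $\lambda_*$. The paper's route gives a formula in terms of $C_{1m}$ and $C_{2m}$, which is what one would use to actually tune the feedback gain in practice.
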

The proof is given below in Section~\ref{sS:proofthm:Eig1-AFeed}. First, we prove several auxiliary results that are essential to the argument.

\begin{remark}
    Theorem~\ref{thm:Eig1-AFeed} tells us that the first eigenvalue~$\Lambda_1(\fkA_m^\lambda)$ of the symmetric operator~$\fkA_m^\lambda$ can be made arbitrarily large by choosing~$h$ small enough and~$\lambda$ large enough. The number~$m=m(h)$ of elements in the partition satisfies $m\to\infty$ as~$h\to0$.
\end{remark}

\subsection{Auxiliary Poincar\'e-like inequalities}
For a partition as in Assumption~\ref{ass:domain},
\begin{subequations}\label{omega_iPh}
 \begin{equation}
  \fkP_h=\{\clO_i\mid 1\le i\le m\},\quad\text{satisfying}\quad \omega_i\subset \clO_i\subset h\fkB_1, 
\end{equation}
where~$\omega_i=h\omega_*$, up to a translation and a rotation, we define the set
\begin{equation}
U_m^0\coloneqq\{\indf_{\omega_i}\mid 1\le i\le m\},\qquad\clU_m^0\coloneqq\linspan U_m^0,
\end{equation}
\end{subequations}
 of indicator functions~$\indf_{\omega_i}$ and the orthogonal projection~$P_{\clU_m^0}$ in~$L^2(\Omega)$ onto~$\clU_m^0$.

\begin{lemma}\label{lem:poincare0}
 Let Assumption~\ref{ass:domain} hold true. Then, for~$m$ indicator functions as in~\eqref{omega_iPh}, we have that the Poincar\'e-like constant
 \[
\beta_{\fkP_h}^0\coloneqq\inf_{f\in (W^{1,2}(\Omega)\cap \clU_{m}^{0,\perp})\setminus\{0\}}\frac{\norm{f}{W^{1,2}(\Omega)}^2}{\norm{f}{L^2(\Omega)}^2}
\]
satisfies~$\lim_{h\to 0}\beta_{\fkP_h}^0=\infty$.
\end{lemma}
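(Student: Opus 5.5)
The plan is a localized Poincaré inequality on each cell $\clO_i$, obtained by scaling to a fixed reference configuration. Since $\clO_i$ has diameter $\le 2h$ and $\omega_i\subset\clO_i$, we get a Poincaré constant of order $h^{-2}$. The key is to make this constant uniform in $i$ and $h$ by exploiting the uniform extension bound $C_E$ from Assumption~\ref{ass:domain}.

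First, the orthogonality $f\in\clU_m^{0,\perp}$ means exactly that $\int_{\omega_i}f\,\rmd x=0$ for every $i$, because the $\omega_i$ are disjoint. Thus it suffices to prove a local estimate: there is $C$ independent of $h$ and $i$ such that
\[
\norm{f}{L^2(\clO_i)}^2\le C h^2\norm{\nabla f}{L^2(\clO_i)}^2 \qquad\text{whenever } \int_{\omega_i}f=0 .
\]
Summing over $i$ then gives $\norm{f}{L^2(\Omega)}^2\le Ch^2\norm{\nabla f}{L^2(\Omega)}^2$. Consequently
\[
\norm{f}{W^{1,2}}^2\ge (1+C^{-1}h^{-2})\norm{f}{L^2}^2,
\]
so $\beta^0_{\fkP_h}\ge 1+C^{-1}h^{-2}\to\infty$.

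For the local estimate, rescale by $x=h\xi$, after the translation and rotation, which preserve all the norms involved. Set $g(\xi)=f(h\xi)$ on $\widetilde\clO_i=h^{-1}\clO_i\subset\fkB_1$, which contains $\omega_*$. The estimate then reduces to the uniform bound
\[
\norm{g}{L^2(\widetilde\clO_i)}\le C\norm{\nabla g}{L^2(\widetilde\clO_i)} \qquad\text{for } \int_{\omega_*}g=0 .
\]
To obtain it, extend $g$ using the rescaled extension $\widetilde E_i g=(E_i f)(h\,\cdot)$ and restrict to $\fkB_1$. On the fixed domain $\fkB_1\supset\omega_*$ the classical Poincaré inequality with zero mean on $\omega_*$ holds, by the compactness (Rellich) argument on the ball:
\[
\norm{G}{L^2(\fkB_1)}\le C_P\norm{\nabla G}{L^2(\fkB_1)} \qquad\text{for } \int_{\omega_*}G=0 .
\]

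The main obstacle is that the extension bound in Assumption~\ref{ass:domain} is stated for the full $W^{1,2}$ norm at scale $h$, not for the gradient alone at unit scale, so rescaling mixes the $h$-weights. To handle this, first replace $g$ by $g-c$ with $c$ the mean of $g$ over $\widetilde\clO_i$. Then interpret the uniform extension bound as a bound for the scaled norm $\norm{\cdot}{L^2}+\norm{\nabla\cdot}{L^2}$ on unit-size domains; I will argue that the assumption is intended in this scale-invariant sense, or equivalently that it applies to the rescaled cells $\widetilde\clO_i$. The estimate then closes as follows. The constants give $\norm{\widetilde E_i g}{W^{1,2}(\fkB_1)}\le C_E\norm{g}{W^{1,2}(\widetilde\clO_i)}$. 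Combining the ball Poincaré inequality with the vanishing mean on $\omega_*$, and absorbing the $L^2$ part by a contradiction/compactness argument uniform over the family (using the uniform $C_E$ to obtain a convergent subsequence in $L^2(\fkB_1)$), yields the desired $C$ independent of $i$ and $h$.
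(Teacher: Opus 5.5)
Your reduction to the cellwise estimate $\norm{f}{L^2(\clO_i)}^2\le Ch^2\norm{\nabla f}{L^2(\clO_i)}^2$ for $\int_{\omega_i}f=0$, and the summation over cells, are fine. The gap is that this cellwise estimate is never proved. Moreover, the obstacle you describe only appears because you rescale $E_i$ to unit scale. With Assumption~\ref{ass:domain} as stated, stay at scale $h$. Let $F\coloneqq E_if$, restricted to the ball $h\fkB_1\supset\clO_i$ (after the rigid motion). Then $\int_{h\omega_*}F=\int_{\omega_i}f=0$, and the ball Poincar\'e inequality, whose constant scales like $h$, gives
\[
\norm{f}{L^2(\clO_i)}\le\norm{F}{L^2(h\fkB_1)}\le C_Ph\norm{\nabla F}{L^2(h\fkB_1)}\le C_PC_E\,h\bigl(\norm{f}{L^2(\clO_i)}+\norm{\nabla f}{L^2(\clO_i)}\bigr).
\]
For $C_PC_Eh\le\frac12$ the $L^2$ term is absorbed, and $\norm{f}{L^2(\clO_i)}\le 2C_PC_Eh\norm{\nabla f}{L^2(\clO_i)}$. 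This is the paper's proof, phrased there as $\beta^0_{\fkP_h}\ge C_E^{-1}C_h$ with $C_h\to\infty$ by scaling. The factor $h$ from the small-scale Poincar\'e constant is what makes the $L^2$ part of the full norm harmless, and no compactness is involved.

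You instead replace the hypothesis by a unit-scale bound on the rescaled cells. This is not equivalent to the stated one, since rescaling changes the relative weight of the $L^2$ and gradient parts by $h^2$. Under that reading the same chain only yields $\norm{g}{L^2}\le C_PC_E(\norm{g}{L^2}+\norm{\nabla g}{L^2})$ with an $O(1)$ factor. So everything rests on a Poincar\'e inequality uniform over the family $\{\widetilde\clO_i\}$, and your sketch does not provide it:
\begin{itemize}
\item Subtracting the mean over $\widetilde\clO_i$ needs a uniform Poincar\'e--Wirtinger inequality on $\widetilde\clO_i$, which is the same problem.
\item In the contradiction argument, take $\int_{\omega_*}g_n=0$, $\norm{g_n}{L^2(\widetilde\clO_{i_n})}=1$ and $\norm{\nabla g_n}{L^2(\widetilde\clO_{i_n})}\to0$. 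The extensions do converge in $L^2(\fkB_1)$ along a subsequence, but their gradients are small only on the moving sets $\widetilde\clO_{i_n}$. Nothing then forces the limit to be constant on a set carrying the unit mass, unless you add real geometric control of the cells. For the concrete partitions of the paper, a change of variables to a fixed reference cell gives the uniform inequality directly.
\end{itemize}
Your unease about the literal hypothesis is in fact justified. $E_i$ applied to $f\equiv1$ equals $1$ on $h\omega_*$, so its squared $W^{1,2}(\bbR^d)$-norm is at least the capacity of $h\omega_*$: of order $h$ for $d=3$ and $1/|\log h|$ for $d=2$. On the other hand, $\norm{1}{W^{1,2}(\clO_i)}^2\le|\fkB_1|h^d$. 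Hence no $h$-independent $C_E$ exists as $h\to0$. But that is an issue with the assumption, not a substitute for the missing estimate: under either reading, the central step of your proof is unproved.
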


\begin{proof}
With~$h\omega_*\subset \clO_i\subset h\fkB_1$, let
\[
C_h\coloneqq \inf_{g\in W^{1,2}(h\fkB_1)\cap\{\indf_{h\omega_*}\}^\perp\setminus\{0\}}\frac{\norm{g}{W^{1,2}(h\fkB_1)}^2}{\norm{g}{L^2(h\fkB_1)}^2}.
\]
Then we find, with~$X_0\coloneqq W^{1,2}(\clO_i)\cap\{\indf_{\omega_i}\}^\perp\setminus\{0\}$,
\[
D\coloneqq \inf_{f\in X_0}\frac{\norm{f}{W^{1,2}(\clO_i)}^2}{\norm{f}{L^2(\clO_i)}^2}\ge\inf_{f\in X_0}\frac{\norm{f}{W^{1,2}(\clO_i)}^2}{\norm{E_if}{L^2(h\fkB_1)}^2}\ge C_E^{-1}\inf_{f\in X_0}\frac{\norm{E_if}{W^{1,2}(h\fkB_1)}^2}{\norm{ E_if}{L^2(h\fkB_1)}^2}\ge C_E^{-1}C_h.
\]
Next, with~$X_1\coloneqq (W^{1,2}(\Omega)\cap\clU_{m}^{0,\perp})\setminus\{0\}$,
 \[
\beta_{\fkP_h}^0=\inf_{f\in X_1}\frac{\norm{f}{W^{1,2}(\Omega)}^2}{\norm{f}{L^2(\Omega)}^2}=\inf_{f\in X_1}\frac{\sum_{i=1}^m\norm{f}{W^{1,2}(\clO_i)}^2}{\norm{f}{L^2(\Omega)}^2}\ge \inf_{f\in X_1}\frac{C_E^{-1}C_h\sum_{i=1}^m\norm{f}{L^2(\clO_i)}^2}{\norm{f}{L^2(\Omega)}^2}=C_E^{-1}C_h.
\]
Finally, recall that~$C_h\to\infty$ as~$h\to0$ (e.g., see the arguments in~\cite[Sect.~5]{Rod21-sicon}).
\end{proof}

\begin{corollary}\label{cor:poincare-L2}
With the~$dm=\#U_m$ actuators constructed as in~\eqref{Act-intro}, with~$m$ subdomains~$\omega=\omega_i$ as in~\eqref{omega_iPh} we have that
\[
\beta_{\fkP_h}\coloneqq\inf_{v\in (W^{1,2}(\Omega,\bbR^d)\cap \clU_{m}^\perp)\setminus\{0\}}\frac{\norm{v}{W^{1,2}(\Omega,\bbR^d)}^2}{\norm{v}{L^2(\Omega,\bbR^d)}^2}\longrightarrow\infty.
\]    
\end{corollary}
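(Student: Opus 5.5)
The plan is to reduce the vector statement to the scalar Lemma~\ref{lem:poincare0}, one component at a time. The key point is that $\clU_m=\linspan U_m$ is spanned by the vector fields $\indf_{\omega_i}\mathbf{e}_j$, $1\le i\le m$, $1\le j\le d$, where $\mathbf{e}_j$ denotes the $j$-th canonical basis vector. This is true up to the rotation of each $\omega_i$ allowed in Assumption~\ref{ass:domain}.

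First I treat the case without rotations, where each $\Phi^{[d]}_j$ on $\omega_i$ equals $\indf_{\omega_i}\mathbf{e}_j$. Then $\clU_m=(\clU_m^0)^d$ componentwise. Consequently, $v=(v_1,\dots,v_d)\in\clU_m^\perp$ in $L^2(\Omega,\bbR^d)$ if and only if $(v_j,\indf_{\omega_i})_{L^2(\Omega)}=0$ for all $i,j$, that is, if and only if $v_j\in\clU_m^{0,\perp}$ for each $j$. If $v\in W^{1,2}(\Omega,\bbR^d)$, then each $v_j\in W^{1,2}(\Omega)$. Since the squared norms are sums over components, $\norm{v}{W^{1,2}}^2=\sum_j\norm{v_j}{W^{1,2}}^2$ and $\norm{v}{L^2}^2=\sum_j\norm{v_j}{L^2}^2$, Lemma~\ref{lem:poincare0} gives
\[
\norm{v}{W^{1,2}(\Omega,\bbR^d)}^2\ge\beta^0_{\fkP_h}\sum_{j=1}^d\norm{v_j}{L^2(\Omega)}^2=\beta^0_{\fkP_h}\norm{v}{L^2(\Omega,\bbR^d)}^2 .
\]
Components with $v_j=0$ satisfy this trivially. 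Hence $\beta_{\fkP_h}\ge\beta^0_{\fkP_h}\to\infty$ as $h\to0$.

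If the actuators on $\omega_i$ are rotated by some $R_i\in SO(d)$, the set $\{R_i\mathbf{e}_j\indf_{\omega_i}\}_{j}$ spans the same space $\{c\,\indf_{\omega_i}\mid c\in\bbR^d\}$ as $\{\mathbf{e}_j\indf_{\omega_i}\}_j$, because $R_i$ is invertible. So $\clU_m$ is unchanged, and the same argument applies. Likewise, a rotation of the support $\omega_i$ itself is already covered by Lemma~\ref{lem:poincare0}, which allows each $\omega_i=h\omega_*$ up to translation and rotation.

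The main obstacle is only this bookkeeping: checking that $\clU_m$ is exactly the span of the constant-vector multiples of each $\indf_{\omega_i}$, so that orthogonality decouples componentwise. Once this is verified, the estimate is immediate from Lemma~\ref{lem:poincare0}.
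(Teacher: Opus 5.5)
Your proposal is correct and follows essentially the paper's argument: you reduce componentwise to Lemma~\ref{lem:poincare0}. You also make explicit the decoupling $v\in\clU_m^\perp$ if and only if $v_j\in\clU_m^{0,\perp}$ for every $j$, which the paper uses tacitly. The one difference is that you sum the componentwise inequalities and obtain the sharper bound $\beta_{\fkP_h}\ge\beta^0_{\fkP_h}$, whereas the paper bounds by the largest component $h_I$ and gets $\beta_{\fkP_h}\ge d^{-1}\beta^0_{\fkP_h}$.
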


\begin{proof}
    Writing~$(v_1,...,v_d)\coloneqq v$, from Lemma~\ref{lem:poincare0}
    we have that the scalar coordinates satisfy
    \[
    \beta_{\fkP_h}^0=\inf_{v_i\in (W^{1,2}(\Omega)\cap \clU_{m}^{0,\perp})\setminus\{0\}}\frac{\norm{v_i}{W^{1,2}(\Omega)}^2}{\norm{v_i}{L^2(\Omega)}^2}\longrightarrow\infty.
    \]
    Then, with~$Z\coloneqq(W^{1,2}(\Omega,\bbR^d)\cap \clU_{m}^\perp)\setminus\{0\}$, we find
  \begin{align}
 \beta_{\fkP_h}=\inf_{h\in Z}\frac{\norm{h}{W^{1,2}(\Omega,\bbR^d)}^2}{\norm{h}{L^2(\Omega,\bbR^d)}^2}=\inf_{h\in Z}\frac{\sum_{i=1}^d\norm{h_i}{W^{1,2}(\Omega)}^2}{\sum_{i=1}^d\norm{h_i}{L^2(\Omega)}^2}
\ge\inf_{h\in Z}\frac{\sum_{i=1}^d\norm{h_i}{W^{1,2}(\Omega)}^2}{d\norm{h_I}{L^2(\Omega)}^2}  \notag
  \end{align}
  with~$I\in\argmax\norm{h_i}{L^2(\Omega)}$. Hence, it follows
   \begin{align}
 \beta_{\fkP_h}
\ge\inf_{h\in Z}\frac{\norm{h_I}{W^{1,2}(\Omega)}^2}{d\norm{h_I}{L^2(\Omega)}^2}\ge\frac1d\beta_{\fkP_h}^0\xrightarrow[\quad]{}\infty, \notag
  \end{align} 
  which finishes the proof.
  \end{proof}
\begin{corollary}\label{cor:poincare-H}
Let~$H_1\coloneqq H\cap W^{1,2}(\Omega,\bbR^d)$, then
\[
\beta_{\fkP_h}^H\coloneqq\inf_{v\in (H_1\cap (\Pi \clU_m)^\perp)\setminus\{0\}}\frac{\norm{v}{W^{1,2}(\Omega,\bbR^d)}^2}{\norm{v}{L^2(\Omega,\bbR^d)}^2}\longrightarrow\infty.
\]
\end{corollary}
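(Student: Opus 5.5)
The plan is to reduce Corollary~\ref{cor:poincare-H} directly to Corollary~\ref{cor:poincare-L2}. The key observation is that, for divergence-free fields, orthogonality to the projected actuators is the same as orthogonality to the actuators themselves.

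First, let $v\in H_1\cap(\Pi\clU_m)^\perp$, where orthogonality is understood in $H$, or equivalently in $L^2(\Omega,\bbR^d)$ since $H$ carries the $L^2$ scalar product. For every $\Phi\in\clU_m$ we have $\Phi-\Pi\Phi\in H^\perp$, because $\Pi$ is the orthogonal (Helmholtz) projection in $L^2(\Omega,\bbR^d)$ onto $H$. Since $v\in H$, this gives
\[
(v,\Phi)_{L^2(\Omega,\bbR^d)}=(v,\Pi\Phi)_{L^2(\Omega,\bbR^d)}+(v,\Phi-\Pi\Phi)_{L^2(\Omega,\bbR^d)}=(\Pi\Phi,v)_H=0.
\]
Hence $v\in\clU_m^\perp$ in $L^2(\Omega,\bbR^d)$. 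Because also $v\in W^{1,2}(\Omega,\bbR^d)$, we obtain the inclusion
\[
(H_1\cap(\Pi\clU_m)^\perp)\setminus\{0\}\subseteq(W^{1,2}(\Omega,\bbR^d)\cap\clU_m^\perp)\setminus\{0\}.
\]

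Second, an infimum taken over a smaller set is at least as large. Both Rayleigh quotients use the same norms, $\norm{\cdot}{W^{1,2}(\Omega,\bbR^d)}$ and $\norm{\cdot}{L^2(\Omega,\bbR^d)}$, so the inclusion yields $\beta_{\fkP_h}^H\ge\beta_{\fkP_h}$. Corollary~\ref{cor:poincare-L2} states that $\beta_{\fkP_h}\to\infty$ as $h\to0$, and therefore $\beta_{\fkP_h}^H\to\infty$ as well.

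The argument contains no real obstacle. The only point to check is the identity $(v,\Phi)_{L^2}=(v,\Pi\Phi)_{L^2}$ for $v\in H$, that is, that $\Pi$ is the $L^2$-orthogonal projection onto $H$ and hence self-adjoint. This holds by the definition of the Helmholtz projection in~\eqref{spaceH}. One should also keep in mind that $H_1$ may be a strict subset of $V$. This does not matter here, since the corollary uses the full $W^{1,2}$ norm and no boundary-related structure.
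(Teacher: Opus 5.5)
Your proof is correct and matches the paper's: for $v\in H$, orthogonality to $\Pi\clU_m$ gives orthogonality to $\clU_m$. The infimum is therefore taken over a subset of the one defining $\beta_{\fkP_h}$, so $\beta_{\fkP_h}^H\ge\beta_{\fkP_h}\to\infty$ by Corollary~\ref{cor:poincare-L2}. Your closing aside has the inclusion backwards: since $\overline\fkL$ maps $W^{1,2}$ into $L^2$ one has $H_1\subseteq V$, and the paper uses $V\subseteq H_1$. This plays no role in the argument.
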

\begin{proof}
    Observe that for arbitrary $h\in (H_1\cap (\Pi U_m)^\perp)\setminus\{0\}$ and~$g\in U_m$ we have
   \[
   (h,g)_{L^2(\Omega,\bbR^d)}=(h,\Pi g)_{H}=0. 
   \]
   Thus, in particular, it follows that~$
   h\in (W^{1,2}(\Omega,\bbR^d)\cap  \clU_m^\perp)\setminus\{0\}$.
   Since
   $H_1\subset W^{1,2}(\Omega,\bbR^d)$, it follows that
   \[
\beta_{\fkP_h}^H\coloneqq\inf_{h\in (H_1\cap (\Pi \clU_m)^\perp)\setminus\{0\}}\frac{\norm{h}{W^{1,2}(\Omega,\bbR^d)}^2}{\norm{h}{L^2(\Omega,\bbR^d)}^2}\ge \inf_{h\in (W^{1,2}(\Omega,\bbR^d)\cap \clU_{m}^\perp)\setminus\{0\}}\frac{\norm{h}{W^{1,2}(\Omega,\bbR^d)}^2}{\norm{h}{L^2(\Omega,\bbR^d)}^2}=\beta_{\fkP_h}.
\]
By Corollary~\ref{cor:poincare-L2} it follows that~$\beta_{\fkP_h}^H\ge\beta_{\fkP_h}\longrightarrow\infty$.
\end{proof}
\begin{corollary}\label{cor:poincare-AVH}
We have that
\[
\beta_{\fkP_h}^V\coloneqq\inf_{h\in (V\cap (\Pi \clU_m)^\perp)\setminus\{0\}}\frac{\norm{h}{V}^2}{\norm{h}{H}^2}\longrightarrow\infty.
\]
\end{corollary}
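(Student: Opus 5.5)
Corollary~\ref{cor:poincare-AVH} should follow directly from Corollary~\ref{cor:poincare-H} via the norm equivalence of Assumption~\ref{ass:VnormW12}. We take the $V$-norm to be the one induced by $A_\rmS$, namely $\norm{h}{V}^2=\nu\norm{\overline\fkL h}{L^2}^2+\norm{h}{L^2}^2$; any other admissible choice is equivalent to it with constants independent of~$h$ (the partition scale) and $m$. On $H$ we have $\norm{h}{H}=\norm{h}{L^2(\Omega,\bbR^d)}$.

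The first step is to note that $V\subseteq H_1=H\cap W^{1,2}(\Omega,\bbR^d)$, which is a consequence of Assumption~\ref{ass:VnormW12}. Hence $V\cap(\Pi\clU_m)^\perp\subseteq H_1\cap(\Pi\clU_m)^\perp$. The second step uses Assumption~\ref{ass:VnormW12} to get a constant $c_0>0$, depending only on $\Omega$, $\nu$ and $\overline\fkL$ (and in particular not on the partition), such that
\[
\nu\norm{\overline\fkL v}{L^2}^2+\norm{v}{L^2}^2\ge c_0\norm{v}{W^{1,2}(\Omega,\bbR^d)}^2\quad\text{for all } v\in V.
\]
Combining the two steps, for every nonzero $v\in V\cap(\Pi\clU_m)^\perp$ we obtain
\[
\frac{\norm{v}{V}^2}{\norm{v}{H}^2}\ge c_0\frac{\norm{v}{W^{1,2}(\Omega,\bbR^d)}^2}{\norm{v}{L^2(\Omega,\bbR^d)}^2}\ge c_0\,\beta_{\fkP_h}^H.
\]
Taking the infimum gives $\beta_{\fkP_h}^V\ge c_0\beta_{\fkP_h}^H$, and Corollary~\ref{cor:poincare-H} yields $\beta_{\fkP_h}^V\to\infty$ as $h\to0$.

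The step that needs care is the uniformity of $c_0$. It must not depend on the partition, and it does not, because Assumption~\ref{ass:VnormW12} is a statement about the fixed space $V$ on the fixed domain $\Omega$; the actuators enter only through the orthogonality constraint. A second point to check is that the orthogonality in the definition of $\beta_{\fkP_h}^V$ is meant in $H$, the same sense as in Corollary~\ref{cor:poincare-H}. If it were instead read in the extended space $\bfH$ (for pairs $(h,h|_{\partial\Omega})$), the argument would be unchanged: $\Pi\clU_m$ has zero boundary component, so the constraint involves only $h$, and adding the nonnegative term $(\fkD\phi,\phi)$ to the numerator of the quotient only increases it.
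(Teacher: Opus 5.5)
Your proposal is correct and follows essentially the same route as the paper: the inclusion $V\subseteq H_1$, the norm equivalence from Assumption~\ref{ass:VnormW12} giving $\beta_{\fkP_h}^V\ge C\beta_{\fkP_h}^H$ with $C$ independent of the partition, and then Corollary~\ref{cor:poincare-H}. You also state explicitly that the equivalence constant is independent of $h$ and $m$, which the paper uses without comment.
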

\begin{proof}
 Straightforward from  Corollary~\ref{cor:poincare-H}, from~$V\subseteq H_1=H\cap W^{1,2}(\Omega,\bbR^d)$, and from the fact that, on~$V$, the~$V$-norm is equivalent to the~$W^{1,2}(\Omega,\bbR^d)$-norm; recall the discussion following~\eqref{form4Stokes0}. Indeed we obtain, with some~$C>0$,
 \begin{align*}
  \beta_{\fkP_h}^V &\ge C \inf_{h\in (V\cap (\Pi \clU_m)^\perp)\setminus\{0\}}\frac{\norm{h}{W^{1,2}(\Omega,\bbR^d)}^2}{\norm{h}{H}^2}\ge C \inf_{h\in (H_1\cap (\Pi \clU_m)^\perp)\setminus\{0\}}\frac{\norm{h}{W^{1,2}(\Omega,\bbR^d)}^2}{\norm{h}{H}^2}=C\beta_{\fkP_h}^H.
 \end{align*}
By Corollary~\ref{cor:poincare-H}, we find~$\beta_{\fkP_h}^V\ge C\beta_{\fkP_h}^H\longrightarrow\infty$.
 \end{proof}

\subsection{Proof of Theorem~\ref{thm:Eig1-AFeed}}\label{sS:proofthm:Eig1-AFeed}
Let~$\zeta>0$ be given. We write
\begin{align}\label{estEig1-0}
\Lambda_1(\fkA_m^\lambda)=\min_{v\in V\setminus\{0\}}\frac{\langle A_\rmS v+2\lambda P_{\Pi\clU_M} v,v\rangle_{V',V}}{\norm{v}{H}}=\min_{v\in V\setminus\{0\}}\frac{\norm{v}{V}^2+2\lambda\norm{ P_{\Pi\clU_M} v}{H}^2}{\norm{v}{H}^2}.
\end{align}
Now we fix an auxiliary space of regular vector fields~$\widetilde\phi_i\in V$ so that the matrix~$[(\Pi\Phi_{i},\widetilde\phi_j)_H]$ is invertible, where the~$\Phi_i=\Phi_{n,i}^{[d]}$ are as in~\eqref{Act-intro}. This is possible by a continuity argument, because~$V$ is dense in~$H$ and the matrix~$[\clV_m]\coloneqq[(\Pi\Phi_i,\Pi\Phi_j)_H]$ is  invertible. The last statement is nontrivial, but can be proven by contradiction as follows. Let us assume that~$[\clV_m]$ is singular, then necessarily the $\Pi\Phi_i$ are linearly dependent, which implies that~$\xi\coloneqq\sum_{i=1}^m\sum_{n=1}^d c_{n,i}\Phi_{n,i}^{[d]}=\nabla g$ is a gradient vector field for some $g\in W^{1,2}(\Omega)$ and some constants~$c_{n,i}$.
This means that~$\curl\xi=0$. Clearly~$\frac{\partial\indf_{\omega_i}}{\partial x_j}$ is a distribution supported on the boundary~$\partial\omega_i$ of~$\omega_i$. Consequently, since the boundaries~$\partial\omega_i$ are pairwise disjoint, we have that $\curl\xi=0$ implies that, for each domain~$\omega_i$ we must have ~$\sum_{n=1}^d c_{n,i}\curl\Phi_{n,i}^{[d]}=0$. In particular, this implies that the family~$\{\frac{\partial\indf_{\omega_i}}{\partial x_k}\mid 1\le k\le d\}$ is linearly dependent. That is, for some constants~$\kappa_n$ the vector field~$\zeta\coloneqq(\kappa_1\indf{\omega_i},\dots,\kappa_d\indf{\omega_i})$ is divergence free. Then, for any smooth test function~$\varphi$ supported in~$\Omega$, we have~$0=\langle\diver\zeta,\varphi\rangle= -\langle\zeta,\nabla\varphi\rangle=-\int_{\partial\omega_i}(\kappa\cdot\bfn^{[i]})\varphi\rmd\partial\omega_i$, where $\bfn^{[i]}$ is the outward normal vector to~$\partial\omega_i$. Since~$\varphi$ is arbitrary, we obtain that~$\kappa\cdot\bfn^{[i]}$ vanishes on~$\partial\omega_i$. This is not possible for a fixed vector~$\kappa\in\bbR^d$ and for a bounded domain~$\omega_i\subset\bbR^d$, in particular, it is clearly not possible for rectangular/box domains as in Figs.~\ref{fig:act} and~\ref{fig:act-recttri}.
This is the sought  contradiction. 

Now, since the matrix~$[(\Pi\Phi_{i},\widetilde\phi_j)_H]$ is invertible, by using~\cite[Lem.~2.7]{KunRod19-cocv} we have the direct sum~$H=\widetilde \clU\oplus(\Pi\mathcal{U}_m)^\perp$ and the associated 
oblique projection~$P_{\widetilde \clU}^{(\Pi\mathcal{U}_m)^\perp}$ in~$H$ onto
\[
\widetilde \clU\coloneqq\linspan\{\widetilde\phi_j\mid 1\le j\le dm\}
\] along~$(\Pi\mathcal{U}_m)^\perp$ and the complementary projection~$P_{(\Pi\mathcal{U}_m)^\perp}^{\widetilde \clU}$ in~$H$ onto
$(\Pi\mathcal{U}_m)^\perp$ along~$\widetilde \clU$.
Analogously, since the matrix~$[\clV_m]=[(\Pi\Phi_i,\Pi\Phi_j)_H]$ is  invertible, we can write the orthogonal projection~$P_{\Pi\mathcal{U}_m}=P_{\Pi\mathcal{U}_m}^{(\Pi\mathcal{U}_m)^\perp}$ in~$H$ onto~$\Pi\clU_m$ as in~\eqref{Feed-orth}, \eqref{Feed-intro}, that is, $\Pi U_m^\diamond K_m^1 \coloneqq - P_{\Pi\mathcal{U}_m}$.
In particular, we have that any~$y\in H$ can be written in an unique way as
\[
y=\vartheta+\theta,\qquad\theta=P_{\widetilde \clU}^{(\Pi\mathcal{U}_m)^\perp}y\in \widetilde \clU,\quad\vartheta=P_{(\Pi\mathcal{U}_m)^\perp}^{\widetilde \clU}y\in(\Pi\mathcal{U}_m)^\perp.
\]
Furthermore, observe that, for~$y\in V\subset H$,
\begin{align*}
&\norm{\theta+\vartheta}{V}^2=\norm{\vartheta}{V}^2+\norm{\theta}{V}^2+2(\theta,\vartheta)_V\ge \norm{\vartheta}{V}^2+\norm{\theta}{V}^2-\frac12\norm{\vartheta}{V}^2-2\norm{\theta}{V}^2\ge \frac12\norm{\vartheta}{V}^2-\norm{\theta}{V}^2,
\end{align*}
which gives us, with~$\clY\coloneqq\widetilde \clU\times(\Pi\mathcal{U}_m)^\perp$,
\begin{align}\label{estEig1-1}
2\Lambda_1(\fkA_m^\lambda)&\ge\min_{\stackrel{y = \vartheta + \theta}{(\vartheta,\theta)\in \clY\setminus\{0\} }}\frac{\norm{\vartheta}{V}^2-2\norm{\theta}{V}^2+4\lambda \norm{ P_{\Pi\clU_m}y}{H}^2}{\norm{\vartheta+\theta}{H}^2}\\
&=\min_{(\vartheta,\theta)\in \clY \setminus\{0\}}\frac{\norm{\vartheta}{V}^2-2\norm{\theta}{V}^2+4\lambda \norm{ P_{\Pi\clU_m}\theta}{H}^2}{\norm{\vartheta+\theta}{H}^2}\\
&\ge\min_{(\vartheta,\theta)\in \clY\setminus\{0\} }\frac{\norm{\vartheta}{V}^2+(4\lambda C_{1m} -2C_{2m})\norm{\theta}{H}^2}{\norm{\vartheta+\theta}{H}^2}
\end{align}
with~$C_{1m}\coloneqq \min_{\theta\in\Pi \widetilde\clU_m}\frac{\norm{P_{\Pi\clU_m}\theta}{H}^2}{\norm{\theta}{H}^2}$ and~$C_{2m}\coloneqq \max_{\theta\in\Pi \widetilde\clU_m}\frac{\norm{\theta}{V}^2}{\norm{\theta}{H}^2}\ge1$. Note that~$\Pi \widetilde\clU_m$ is finite-dimensional and~$\theta\in \Pi \widetilde\clU_m$. Note also that~$C_{1m}>0$ because~$P_{\Pi\clU_m}\theta=0$ implies that~$\theta\in (\Pi\clU_m)^\perp$ and thus~$\theta\in\widetilde \clU\cap (\Pi\clU_m)^\perp=\{0\}$, that is, $P_{\Pi\clU_m}\theta=0$ if and only if~$\theta=0$.

Next, by Corollary~\ref{cor:poincare-AVH}, for~$h\le h^*=h^*(\zeta)$ and actuators associated to a partition~$\fkP_h$ of~$\Omega$ we find~$\norm{\vartheta}{V}\ge4\zeta\norm{\vartheta}{H}^2$. Next, we can choose~$\lambda_*=\lambda_*(\zeta, m)=\lambda_*(\zeta,h)$, such that~$\lambda_*\ge (2C_{1m})^{-1}(2\zeta+C_{2m}).$ Thus for~$\lambda\ge\lambda_*$, we find
\[
2\Lambda_1(\fkA_m^\lambda)\ge4\zeta\frac{\norm{\vartheta}{H}^2+\norm{\theta}{H}^2}{\norm{\vartheta+\theta}{H}^2}\ge2\zeta\frac{2\norm{\vartheta}{H}^2+2\norm{\theta}{H}^2}{\norm{\vartheta+\theta}{H}^2}\ge2\zeta,
\]
which finishes the proof of Theorem~\ref{thm:Eig1-AFeed}.\qed

	\section{Stabilizability}\label{sec:stabil}
Here, we prove the main stabilizability results of the manuscript in Theorem~\ref{th:main} below. Then, we show that the result announced in Theorem~\ref{th:main-intro} follows as a consequence. We assume that~$Y_0\in\bfH$.
\begin{theorem}\label{th:main}
 Let Assumptions~\ref{ass:Green}--\ref{ass:VcompH}, \ref{ass:target} and~\ref{ass:domain} hold true and let~$\zeta>0$. Then,  there exists~$h_*>0$ such that for every $h\le h_*$ and every partition~$\fkP_h=\{\clO_i\mid 1\le i\le m\}$ of~$\Omega\subset\bbR^d$ as in Assumption~\ref{ass:domain},
there exists~$\lambda_*>0$ such that for every~$\lambda\ge\lambda_*$, we have that every weak solution of system~\eqref{nse-dif-evol} satisfies
\begin{align*}
\norm{(y,\varphi)(t)}{\bfH,\beta}^2& \le \rme^{-\beta^{-1}\alpha(t-s)}\norm{(y,\varphi)(s)}{\bfH,\beta}^2,\qquad t\ge s\ge0
\end{align*}
with $\norm{(z,\phi)}{\bfH,\beta}$ as in~\eqref{normbetaH} and with~$\alpha$ as in~\eqref{fkDalpha}.
\end{theorem}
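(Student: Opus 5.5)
The plan is a direct energy estimate for the difference system~\eqref{nse-dif-evol}, with the coercivity supplied by Theorem~\ref{thm:Eig1-AFeed} for a suitably large~$\zeta$. Formally, we test~\eqref{nse-dif-evol} with~$2Y=2(y,\varphi)$, where~$\varphi=y\rest{\partial\Omega}$ since~$Y\in\bfV$. The left-hand side gives~$\tfrac{\rmd}{\rmd t}\norm{Y}{\bfH,\beta}^2$, because the factor~$\beta$ sits on the boundary component; this is exactly why the weighted norm~\eqref{normbetaH} is used.

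\textbf{Step 1 (energy identity).} On the right-hand side we have
\[
-2\langle AY,Y\rangle=-2\nu\norm{\overline\fkL y}{}^2-2\norm{y}{H}^2-2\langle\fkD\varphi,\varphi\rangle .
\]
The nonlinear term satisfies~$\langle B(y,y),y\rangle=0$, and likewise~$\langle B(\widehat\bfy,y),y\rangle=0$, since all fields are divergence free and tangent to~$\partial\Omega$. Hence the only remaining part of~$-2\langle A_{\rm rc}Y,Y\rangle$ is~$-2\langle B(y,\widehat\bfy),y\rangle+2\norm{y}{H}^2$. The feedback contributes~$2(F_m^\lambda Y,Y)=-2\lambda\norm{P_{\Pi\clU_m}y}{H}^2$. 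Splitting~$-2\langle A_\rmS y,y\rangle-2\lambda\norm{P_{\Pi\clU_m}y}{H}^2=-\langle\fkA_m^\lambda y,y\rangle-\langle A_\rmS y,y\rangle$, with~$\fkA_m^\lambda$ as in~\eqref{fkA}, we arrive at
\begin{align*}
\tfrac{\rmd}{\rmd t}\norm{Y}{\bfH,\beta}^2
&=-\langle\fkA_m^\lambda y,y\rangle-\norm{y}{V}^2+2\norm{y}{H}^2\\
&\quad-2\langle B(y,\widehat\bfy),y\rangle-2\langle\fkD\varphi,\varphi\rangle .
\end{align*}

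\textbf{Step 2 (absorbing the reaction term).} By H\"older, Sobolev ($d\le3$), Assumption~\ref{ass:VnormW12} and Assumption~\ref{ass:target},
\[
\norm{\langle B(y,\widehat\bfy),y\rangle}{}\le C\widehat C_2\norm{y}{H}^{\frac12}\norm{y}{V}^{\frac32}.
\]
Young's inequality then gives~$2\norm{\langle B(y,\widehat\bfy),y\rangle}{}\le\norm{y}{V}^2+C_{\rm r}\norm{y}{H}^2$, with~$C_{\rm r}$ depending only on~$\widehat C_2$ and~$\Omega$. For the boundary term, \eqref{fkDalpha} gives~$-2\langle\fkD\varphi,\varphi\rangle\le-2\alpha\norm{\varphi}{L^2(\partial\Omega,\bfT\partial\Omega)}^2$. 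By Theorem~\ref{thm:Eig1-AFeed}, applied with~$\zeta\coloneqq2+C_{\rm r}+\beta^{-1}\alpha$, there exist~$h_*$ and~$\lambda_*(h)$ such that~$\langle\fkA_m^\lambda y,y\rangle\ge\zeta\norm{y}{H}^2$. Combining these bounds,
\[
\tfrac{\rmd}{\rmd t}\norm{Y}{\bfH,\beta}^2\le-\beta^{-1}\alpha\norm{y}{H}^2-2\alpha\norm{\varphi}{}^2\le-\beta^{-1}\alpha\norm{Y}{\bfH,\beta}^2 .
\]
The last inequality uses~$2\alpha\ge\alpha=\beta^{-1}\alpha\cdot\beta$. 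Gronwall's lemma on~$[s,t]$ then yields the claim. Note that the boundary dissipation~$\alpha\norm{\varphi}{}^2$ is not improved by the interior feedback, so the rate~$\beta^{-1}\alpha$ comes from the boundary component.

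\textbf{Step 3 (justification for weak solutions).} This is the main obstacle. A weak solution in~$\clW$ only has~$\dot Y\in L^{\frac43}(0,T;\bfV')$ when~$d=3$, so testing with~$Y$ is not directly legitimate. I would carry out Steps 1--2 on the Galerkin system~\eqref{nse-GalN}, where all computations are rigorous: $P_N$ is self-adjoint, and the trilinear cancellations hold because~$Y^N\in\bfV$. This gives the integrated inequality~$\norm{Y^N(t)}{\bfH,\beta}^2\le\rme^{-\beta^{-1}\alpha(t-s)}\norm{Y^N(s)}{\bfH,\beta}^2$. Passing to the limit requires pointwise-in-time convergence~$Y^N(t)\to Y(t)$ in~$\bfH$ for almost every~$t$; this follows from the strong~$L^2(0,T;\bfH)$ convergence along a further subsequence, together with weak lower semicontinuity at~$t$. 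This gives the estimate for almost every~$s$ and all~$t$ for the constructed solutions, which is the Leray--Hopf sense of ``weak solution''.

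To reach all~$s$ and every weak solution in the sense of Section~\ref{sec:well-posed}, I would use one of two routes. In~$d=2$ one can use the Lions--Magenes regularity~$\dot Y\in L^2(0,T;\bfV')$, which gives a genuine energy identity. In~$d=3$ I would interpret the statement for solutions satisfying the strong energy inequality and argue via weak continuity of~$Y$ in~$\bfH$. I expect this technical point, rather than the coercivity estimate, to require the most care.
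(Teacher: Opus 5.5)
Your Steps 1--2 are essentially the paper's proof: test with $2Y$, cancel the transport terms, bound $\langle B(y,\widehat\bfy),y\rangle$ by $C|y|_H^{1/2}|y|_V^{3/2}$ and absorb it with Young, then apply Theorem~\ref{thm:Eig1-AFeed} with $\zeta$ large compared to $\beta^{-1}\alpha$ and the reaction constant; your bookkeeping is slightly more careful than the paper's, since you keep the $+2|y|_H^2$ coming from $A_{\rm rc}$ and the factor $2\alpha$ on the boundary. Your Step 3 supplies a justification that the paper's purely formal computation omits, and it is sound with one caveat: in $d=3$, weak lower semicontinuity recovers all $t$ but only almost every $s$ (and $s=0$), so the bound holds for the Galerkin/Leray--Hopf-type solutions rather than literally for every weak solution and every $s$.
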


\begin{proof}
Testing the dynamics in~\eqref{nse-dif-evol} with~$2(y,\varphi)$ we find, with~$Y\coloneqq(y,\varphi)$,
\begin{align*}
\tfrac{\rmd}{\rmd t}\norm{Y}{\bfH,\beta}^2&=-2\norm{Y}{\bfV}^2-2(B(y,\widehat\bfy),y)_{V',V}
     +2(U_m^\diamond K_m y,y)_H\\
     &\le -2\norm{Y}{\bfV}^2+2C_\clN\widehat C_2\norm{y}{H}^\frac12\norm{y}{V}^\frac32-2\lambda \norm{P_{\clU_m} y}{H}^2  \le -\norm{Y}{\bfV}^2-2\lambda \norm{P_{\clU_m} y}{H}^2 +CC_\clN^4\widehat C_2^4\norm{y}{H}^2 \\
     &= -\langle A_\rmS y,y\rangle_{V',V} -\langle\fkD\varphi,\varphi\rangle_{\fkV',\fkV}-2\lambda \norm{P_{\clU_m} y}{H}^2 +CC_\clN^4\widehat C_2^4\norm{y}{H}^2 
\end{align*}
Now, by  Theorem~\ref{thm:Eig1-AFeed} it follows that, for any given~$\mu>0$ we can choose~$m$ and~$\lambda$ large enough so that
\begin{align*}
\tfrac{\rmd}{\rmd t}\norm{Y}{\bfH,\beta}^2& \le -\mu\norm{y}{H}^2 -\alpha\beta^{-1}\beta\norm{\varphi}{L^2}^2\le-\min\{\mu,\beta^{-1}\alpha\}\norm{Y}{\bfH,\beta}^2,
\end{align*}
with~$\alpha$ as in~\eqref{fkDalpha}. The rate~$\beta^{-1}\alpha$ can be achieved by choosing~$\mu\ge\beta^{-1}\alpha$.    
\end{proof}

\begin{proof}[Proof of Theorem~\ref{th:main-intro}]
It is sufficient to show that the construction illustrated in Fig.~\ref{fig:act} leads to the satisfiability of Assumption~\ref{ass:domain}. This will be shown in Section~\ref{satis-ass:domain}.
\end{proof}

\section{Satisfiability of the assumptions}\label{sec:satisfAss}

\subsection{Satisfiability of Assumptiom~\ref{ass:VnormW12}}
We show here that Assumptiom~\ref{ass:VnormW12} is satisfied in the cases~$\overline\fkL\in\{\nabla,\bfD,\curl\}$. 

In the case~$\overline\fkL=\nabla$ the result follows straightforwardly from the definition of the Sobolev $W^{1,2}$-norm. 
In the case~$\overline\fkL=\bfD$ we can use the Korn inequatily as in~\cite[Thm.~2, Ineq.~(2.21)]{FrattaSolombrino25}, \cite[Ineq.~(7)]{Nitsche81}. 
Finally, in the case~$\overline\fkL=\curl$ we can use~\cite[Appen.~I, Prop.~1.4]{Temam01}.

\subsection{Satisfiability of Assumptiom~\ref{ass:VcompH}}
By Assumption~\ref{ass:VnormW12} it follows that~$\bfV\xhookrightarrow{\rm c}\bfH$.

Now, let~$\bfw\coloneqq(w,\phi)\in\bfH$ satisfy $(\bfw,\bfv)_\bfH=0$ for all~$\bfv=(v,\psi)\in\bfV$.
Let also~$V_0\coloneqq\{h\in H\mid\; \overline{\fkL} h\in L^2\mbox{ and }h\rest{\partial\Omega}=0\}=\{h\in H\mid\; \nabla h\in L^2\mbox{ and }h\rest{\partial\Omega}=0\}$. Then, since~$V_0\subset V\times\{0\}\subset V$
we find that~$(w,v)_H=(\bfw,\bfv)_\bfH=0$ for all~$\bfv=(v,0)\in\bfV_0$.  Since~$V_0\xhookrightarrow{\rm d}H$ (e.g., see~\cite[Ch.~1, Sect.~1.4, Thms.~1.4 and 1.6]{Temam01}) we have that~$w=0$. Hence, $(\phi,\psi)_{L^2}=0$ for all~$\bfv=(v,\psi)\in\bfV$. 
Next, for any given~$\psi_*\in W^{\frac12,2}(\partial\Omega,\bfT\partial\Omega)$ there exists a vector field~$z_*\in V$ with~${z_*}\rest{\partial\Omega}=\psi_*$ (e.g., see~\cite[Ch.~1, Sect.~2.4, Thm.~2.4]{Temam01}). Thus,
$(z_*,\psi_*)\in\bfV$ and we can conclude that~$(\phi,\psi)_{L^2}=0$ for all~$\psi\in W^{\frac12,2}(\partial\Omega,\bfT\partial\Omega)$. Since~$W^{\frac12,2}(\partial\Omega,\bfT\partial\Omega)\xhookrightarrow{\rm d}L^{2}(\partial\Omega,\bfT\partial\Omega)$, we obtain that~$\phi=0$.
Therefore, necessarily $\bfw=0$ and we can conclude that~$\bfV\xhookrightarrow{\rm d}\bfH$.

\subsection{Satisfiability of Assumption~\ref{ass:target}}
It is common to assume an appropriate bound for the target trajectory; see~\cite[Sect.~2.2]{BarRodShi11}; even for steady states \cite[Eq.~(1.5)]{RaymThev10}. Note also that every smooth function solves the equations, for a suitable ``matching'' external forcing, so such regularity assumption on the target trajectory in Assumption~\ref{ass:target} is not empty.

\subsection{Satisfiability of Assumption~\ref{ass:domain}} \label{satis-ass:domain}

It is straightforward to see that Assumption~\ref{ass:domain} is satisfied by the partitions in Fig.~\ref{fig:act-recttri}, because the elements of the partition are the same, up to a translation and a rotation.

Next, we focus on the disk~$\bbD$ in Fig.~\ref{fig:act}, which we assume  to have radius~$1$, without loss of generality. 
We need to specify the details of the partition~$\bfP_M$ illustrated in Fig.~\ref{fig:act}. The partition includes a disk of radius~$R_{0M}\coloneqq\tfrac{3}{4M}$. For~$M>1$ the partition includes sectors~$\bbS_{M,k,j}$ of annulus~$\bbA_{M,k}$ of radius range~$[R_{0M}+(k-1)\rho_M,R_{0M}+k\rho_M]$, for~$k=1,...,M-1$, with~$\rho_M\coloneqq(1-R_{0M})(M-1)^{-1}$.
The number of sectors in each annulus is given by~$6k$ with angle range~$[(j-1)\theta_{M,k},j\theta_{M,k}]$, $\theta_{M,k}=\frac{2\pi}{6k}$, $1\le j\le 6k$.

We can see that there exists~$h=h(M)>0$ such that each sector in each annulus is contained in a translation of~$h\fkB_1$, and that we can take~$h\to0$  as~$M\to\infty$.

\subsubsection{Diffeomorphism to the unit box}
 Since the sectors in an annulus are the same up to a rotation, it is enough to consider the first ones~$\bbS_{M,k,1}$ with angle range~$[0,\theta_{M,k}]$. Observe also that, for a fixed~$M$, each sector of each annulus is diffeomorphic to the unit box~$\bbB\coloneqq[0,1]\times[0,1]$ through the following mapping, inspired by polar coordinates, 
\[
\Phi\colon (r,\theta)\mapsto ((a_1+a_2r)\cos(a_3\theta),(a_1+a_2r)\sin(a_3\theta)),
\]
with appropriate parameters~$a_i>0$, $1\le i\le 3$. Namely, with
\begin{subequations}\label{param-a}
\begin{align}
&(a_1,\,a_2,\,a_3)=a=a_{M,k}\coloneqq(R_{0M}+(k-1)\rho_M,\,\rho_M,\, \theta_{M,k}),
\intertext{where}
&R_{0M}=\tfrac{3}{4M},\qquad \rho_M=\tfrac{1-R_{0M}}{M-1},\qquad \theta_{M,k}=\frac{2\pi}{6k},\quad k\in\{1,\dots,M-1\}
\end{align}
\end{subequations}
are as above, we have that~$\Phi(\bbB)=\Phi_{M,k}(\bbB)=\bbS_{M,k,1}$.

Note that~$a_3\le\frac\pi3$ and~$\Phi$ is still an injective smooth function, when defined in~$[0,1]\times[-\tfrac32,\tfrac{3}{2}]$. Further, for~$M\ge2$,
\begin{align*}
(a_1+a_2r)>0&\quad\Longleftrightarrow\quad r>-\frac{a_1}{a_2}\quad\Longleftrightarrow\quad r>-\frac{R_{0M}+(k-1)\rho_M}{\rho_M}\\
&\quad\Longleftrightarrow\quad r>-\frac{3M-3}{4M-3}-(k-1)\quad\Longleftarrow\quad r>-\frac{3M-3}{4M-3}
\quad\Longleftarrow\quad r>-\frac{1}{3},
\end{align*}
hence~$\Phi=\Phi_{M,k}$ is still an injective smooth function, when defined in~$\widehat\bbB\coloneqq[-\frac13,1+\frac13]\times[-\tfrac32,\tfrac{3}{2}]$. We shall denote this extension of~$\Phi$ by~$\widehat\Phi\colon\widehat\bbB\to\bbR^2$.

The Jacobian matrix is given by
\[
J(\widehat\Phi)=\begin{bmatrix}
    a_2\cos(a_3\theta)&a_2\sin(a_3\theta))\\
   -a_3 (a_1+a_2r)\sin(a_3\theta)& \quad a_3(a_1+a_2r)\cos(a_3\theta)
\end{bmatrix}
\]
with determinant~$\det(J(\widehat\Phi))=a_2a_3(a_1+a_2r)>0$ and inverse
\[
J^{-1}(\widehat\Phi)
=\begin{bmatrix}
   a_2^{-1} \cos(a_3\theta)&- a_3^{-1} (a_1+a_2r)^{-1}\sin(a_3\theta)\\
   a_2^{-1}\sin(a_3\theta)& a_3^{-1} (a_1+a_2r)^{-1}\cos(a_3\theta)
\end{bmatrix}
\]

\subsubsection{Auxiliary estimates}
Let~$M\ge2$. We derive bounds for the entries of the Jacobian matrix and its inverse. We find
\begin{subequations}\label{aux-ineqJac}
\begin{align}
         M^{-1}\le a_2&=\rho_M\le (M-1)^{-1},
   \end{align}  
 and, after writing
 \begin{align}        
       a_3 (a_1+a_2r)&= \frac{\pi}{3}\rho_M +\frac{\pi}{3k}(R_{0M}+(r-1)\rho_M)\notag
  \intertext{we obtain}     
   a_3 (a_1+a_2r)&\le \frac{\pi}{3}\rho_M +\frac{\pi}{3k}(R_{0M}+\tfrac{1}{3}\rho_M)
   \le \frac{\pi}{3}\left(\rho_M +R_{0M}+\tfrac{1}{3}\rho_M\right)\notag\\& \le \frac{\pi}{3}\left(\tfrac43(M-1)^{-1} +\tfrac34M^{-1}\right)\le8\pi(M-1)^{-1},
  \end{align}  
  and
   \begin{align*}   
   a_3 (a_1+a_2r)&\ge \frac{\pi}{3}\rho_M +\frac{\pi}{3k}(R_{0M}-\tfrac{4}{3}\rho_M)
   \ge \frac{\pi}{3}\rho_M +\frac{\pi}{3k}((1+\tfrac{4}{3(M-1)})R_{0M}-\tfrac{4}{3}\tfrac{1}{M-1})\\
   &\hspace{-3em}= \frac{\pi}{3}\rho_M +\frac{\pi}{9(M-1)k}((3M+1)R_{0M}-4)= \frac{\pi}{3}\rho_M +\frac{\pi}{9(M-1)k}(\tfrac{9M+3}{4M}-4)\\
   &\hspace{-3em}= \frac{\pi}{3}\frac{4M-3}{M(M-1)} +\frac{\pi}{36M(M-1)k}(3-7M)= \frac{\pi}{3M(M-1)}((4-\tfrac{7}{12k})M-(3-\tfrac{1}{4k}))\\
   &\hspace{-3em}= \frac{\pi}{36M(M-1)}\frac{(48k-7)M-(9k-3)}k
   = \frac{\pi}{36M(M-1)}\frac{(48M-9)k+(3-7M)}k.
\end{align*}
Since the sequence~$k\mapsto\frac{(48M-9)k+(3-7M)}k$ is increasing, by taking the value at~$k=1$ we find
\begin{align}
   a_3 (a_1+a_2r)&\ge \frac{\pi(39M-6)}{36M(M-1)}\notag
\end{align}
and since the sequence~$M\mapsto\frac{39M-6}{M-1}$ is decreasing, by taking the limit as~$M\to\infty$ we obtain
\begin{align}
   a_3 (a_1+a_2r)&\ge \frac{39\pi}{36M}\ge \frac{3}{M}.
\end{align}
\end{subequations}
Gathering the above inequalities in~\eqref{aux-ineqJac} we find
 \begin{subequations}\label{aux-ineqJac2}
\begin{align}
&a_2\le (M-1)^{-1},\quad&& a_2^{-1}\le M,&&\\
   & a_3 (a_1+a_2r)\le 8\pi(M-1)^{-1},\quad && a_3^{-1} (a_1+a_2r)^{-1}\le 3^{-1}M,\quad &&-\frac13\le r \le\frac43.
   \end{align} 
\end{subequations}

\subsubsection{The Sobolev extension operators}

In the sectors~$\bbS_{M,k}$ of the partition~$\bfP_M$, we define the extension operator
\[
E_{M,k}\colon W^{1,2}(\bbS_{M,k})\to W^{1,2}(\bbR^2)
\]
as follows. We fix an extension operator
\[
E\in\clL(W^{1,2}(\bbB), W^{1,2}(\bbR^2))
\]
and a smooth cut-off function~$\Psi\colon  \bbR^2\to\bbR$ satisfying
\[
\Psi(x)=1\text{ for }x\in\bbB,\qquad \Psi(x)=0\text{ for }x\notin\widehat\bbB.
\]
Now the operator
\[
E_\Psi\in\clL(W^{1,2}(\bbB), W^{1,2}(\bbR^2)),\qquad (E_\Psi f)(x)\coloneqq \Psi(x)(E f)x)
\]
is still an extension operator with range consisting of functions supported in~$\widehat\bbB$. Therefore, hereafter we can simply assume that~$Ef$ is supported in~$\widehat\bbB$ and we will omit~$\Psi$.

The operator~$E_{M,k}$ then defined by
\[
(E_{M,k}f)(x)\coloneqq (E(f\circ\Phi_{M,k}))(\Phi_{M,k}^{-1}(x)).
\]

It remains to show that the operator norm of~$E_{M,k}$ is uniformly bounded on~$(M,k)$.
Let~$f\in W^{1,2}(\bbS_{M,k})$, $\bbS_{M,k}=\Phi(\bbB)$, $\Phi=\Phi_{M,k}$. By direct computations, with~$z=(r,\theta)\in \widehat\bbB$,
\begin{align*}
    \int_{\bbR^2} (E_{M,k}f)^2(x)\rmd x&=    \int_{\widehat\Phi(\widehat\bbB)} (E_{M,k}f)^2(x)\rmd x =\int_{\widehat\bbB} ((E(f\circ\widehat\Phi)(z)))^2\det (J(\widehat\Phi))\rmd z \\
    &\hspace{-3em}\le \widehat C_J\dnorm{E}{}^2\int_{\bbB} (f(\Phi(z)))^2\rmd z
\end{align*}
where
\[
\widehat C_J\coloneqq \sup_{z\in \widehat\bbB}\det (J(\widehat\Phi(z)))\quad\text{and}\quad\dnorm{E}{}\coloneqq\norm{E}{\clL(W^{1,2}(\bbB),W^{1,2}(\bbR^2))}.
\]
Then, continuing the computations,
\begin{align}
    \int_{\bbR^2} (E_{M,k}f)^2(x)\rmd x&\le \widehat C_J\dnorm{E}{}^2C_{J^{-1}}\int_{\bbB} (f(\Phi(z)))^2\det (J(\Phi))\rmd z\notag\\
    &=\widehat C_J\dnorm{E}{}^2 C_{J^{-1}}\int_{\Phi(\bbB)} f^2(x)\rmd x,\notag
\end{align}
where
\[
 C_{J^{-1}}\coloneqq \sup_{z\in \bbB}(\det (J(\Phi(z))))^{-1}.
\]

Recalling that~$\det(J(\widehat\Phi))=a_2a_3(a_1+a_2r)>0$, by~\eqref{aux-ineqJac2} we find that, since $M\ge2$,
\begin{equation}\label{bdd-CJhatJ-1}
    \widehat C_JC_{J^{-1}}\le8\pi(M-1)^{-2}3^{-1}M^2\le\frac{32\pi}{3}\le11\pi.
\end{equation}
Therefore, 
\begin{align}\label{ExtL2}
    \norm{E_{M,k}f}{L^2(\bbR^2)}^2\le 11\pi\dnorm{E}{}^2\norm{f}{L^2(\bbS_{M,k})}^2.
\end{align}

Next we consider the gradient. We start by observing that with~$x=\widehat\Phi(z)$, $x=(x_1,x_2)$, $z=(r,\theta)$, we have
\[
\nabla_z\coloneqq\begin{bmatrix}
    \tfrac{\partial}{\partial r}\\
    \tfrac{\partial}{\partial \theta}
\end{bmatrix}=J(\widehat\Phi)\begin{bmatrix}
    \tfrac{\partial}{\partial x_1}\\
    \tfrac{\partial}{\partial x_2}
\end{bmatrix}\eqqcolon J(\widehat\Phi)\nabla_x
\]
Then, we can proceed with direct computations 
\begin{align}
    \int_{\bbR^2} (\nabla_xE_{M,k}f)^2(x)\rmd x &=    \int_{\widehat\Phi(\widehat\bbB)} (\nabla_xE_{M,k}f)^2(x)\rmd x\notag\\     &\hspace{-8em} =\int_{\widehat\bbB} (J^{-1}(\widehat\Phi)\nabla_z(E(f\circ\widehat\Phi)(z)))^2\det (J(\widehat\Phi))\rmd z \le \widehat C_{[J^{-1}]}\widehat C_J\int_{\widehat\bbB} (\nabla_z(E(f\circ\widehat\Phi)(z)))^2\rmd z\notag\\     &\hspace{-8em} \le \widehat C_{[J^{-1}]}\widehat C_J\dnorm{E}{}\int_{\bbB} (\nabla_z(f(\Phi(z))))^2\rmd z\notag\\     &\hspace{-8em} \le \widehat C_{[J^{-1}]}\widehat C_J\dnorm{E}{}C_{[J]}C_{J^{-1}}\int_{\bbB} (J^{-1}(\Phi)\nabla_z(f(\Phi(z))))^2\det (J(\Phi))\rmd z\notag\\
    &\hspace{-8em} = \widehat C_{[J^{-1}]}\widehat C_J\dnorm{E}{}C_{[J]}C_{J^{-1}}\int_{\Phi(\bbB)} (\nabla_xf(x))^2\rmd x\notag
\end{align}
where
\[
\widehat C_{[J^{-1}]}\coloneqq \sup_{z\in \widehat\bbB}\norm{ J(\widehat\Phi(z)}{\ell^\infty}\quad\text{and}\quad C_{[J]}\coloneqq \sup_{z\in \bbB}\norm{ J(\Phi(z)}{\ell^\infty}.
\]

Using~\eqref{aux-ineqJac2} we find that, since $M\ge2$,
\begin{equation}\label{bdd-CJhatJ-2}
    \widehat C_{[J^{-1}]}C_{[J]}\le8\pi(M-1)^{-1}M\le16\pi.
\end{equation}
Therefore, from~\eqref{bdd-CJhatJ-1} and~\eqref{bdd-CJhatJ-2}, it follows
\begin{align}\label{ExtL2grad}
    \norm{\nabla E_{M,k}f}{L^2(\bbR^2)}^2\le 176\pi^2\dnorm{E}{}^2\norm{\nabla f}{L^2(\bbS_{M,k})}^2.
\end{align}
By combining~\eqref{ExtL2} and~\eqref{ExtL2grad}, we arrive at
\begin{align}\label{Ext-norm}
    \norm{E_{M,k}}{\clL(L^2(\bbS_{M,k}),L^2(\bbR^2))}^2\le 176\pi^2\dnorm{E}{}^2.
\end{align}
Hence, with a bound $C_E\coloneqq 176\pi^2\dnorm{E}{}^2$ independent of~$(M,k)$. We conclude that Assumption~\ref{ass:domain} is satisfied.

\subsubsection{Remarks}
 We have shown the satisfiability of Assumption~\ref{ass:domain} by using a diffeomorphism between the elements of the partition to a common box~$\bbB$. The same procedure can be applied to other domains. We could also consider other partitions, for example, triangulations as used in numerical simulations. In that case it will be natural to seek an isomorphism to a common reference triangle and the satisfiability of Assumption~\ref{ass:domain} means a certain uniformity/nondegeneracy of the triangular elements. This also means that likely, in practice, we could use the available triangulation software to seek appropriate partitions.
 
 An alternative to the proof of the satisfiability  of Assumption~\ref{ass:domain} could be to explore existing results in literature concerning uniform Sobolev extensions. For example, it would be enough to show that the elements of the partition are Jones $(\varepsilon,\delta)$-domains with a common pair$(\varepsilon,\delta)$; see~\cite[Ineqs.~(1.1)--(1.2) and Thm.~1]{Jones81}.

\bigskip
\noindent
{\bf Acknowledgments:}
The work of S. Rodrigues work is funded by national funds through the FCT -- Funda\c{c}\~{a}o para a Ci\^{e}ncia e a Tecnologia, I.P., under the scope of the projects UID/00297/2025 (\url{https://doi.org/10.54499/UID/00297/2025}) and UID/PRR/00297/2025 (\url{https://doi.org/10.54499/UID/PRR/00297/2025}) (Center for Mathematics and Applications -- NOVA Math)


	\bibliographystyle{plainurl}
	\bibliography{ObliqueDynBC}
	
\end{document}